\documentclass[12pt]{amsart}%
\usepackage{verbatim}
\usepackage{latexsym}
\usepackage{amsmath}
\usepackage{amsfonts}
\usepackage{float}
\usepackage{tikz}
\usepackage{placeins}%
\usetikzlibrary{positioning}
\newcommand{\countpct}[2]{%
  #1\,{\scriptsize(#2\%)}%
}
\usepackage{booktabs}
\usepackage{bbm}

\usepackage{tabularx,multirow,booktabs,makecell,amssymb}
\usepackage{siunitx}
\usepackage{amssymb,hyperref}
\usepackage[normalem]{ulem}
\usepackage{graphicx}
\usepackage{caption}
\usepackage{subcaption}

\renewcommand{\pmod}[1]{\hspace{0.05em}(\mathrm{mod}\,#1)}

\usepackage{dsfont}

\renewcommand{\pmod}[1]{\,{\rm mod}\,#1}

\newcommand{\C}{\mathbb{C}}
\newcommand{\F}{\mathbb{F}}

\newcommand{\Q}{\mathbb{Q}}
\newcommand{\Z}{\mathbb{Z}}

\newcommand{\E}{\mathbb{E}}
\newcommand{\dd}{\;\mathrm{d}}%

\newtheorem{thm}{Theorem}[section]

\newtheorem{prop}[thm]{Proposition}

\newtheorem{lem}[thm]{Lemma}
\newtheorem{conj}[thm]{Conjecture}

\newtheorem{qu}[thm]{Question}

\theoremstyle{definition}
\newtheorem{rem}[thm]{Remark}

\numberwithin{equation}{section}
\numberwithin{table}{section}
\numberwithin{figure}{section}

\newcommand{\m}{\mathrm{m}}

\newcommand{\hypgeo}[2]{%
  {\vphantom{F}}_{#1}\kern-\scriptspace F_{#2}%
}

\newmuskip\pFqmuskip
\newcommand*\pFq[6][8]{%
  \begingroup%
  \pFqmuskip=#1mu\relax
  \mathchardef\normalcomma=\mathcode`,
  \mathcode`\,=\string"8000
  \begingroup\lccode`\~=`\,
  \lowercase{\endgroup\let~}\pFqcomma
  {}_{#2}F_{#3}{\left(\!\left.\genfrac..{0pt}{}{#4}{#5}\right|#6\!\right)}%
  \endgroup
}
\newcommand{\pFqcomma}{{\normalcomma}\mskip\pFqmuskip}

\begin{document}
\raggedbottom%

\title{Machine learning the arithmetic of Boyd's Mahler measure conjectures}
\author{Alberto Alfarano}
\author{Pablo Bianucci}

\author{Matilde N. Lal\'in}
\author{Berend Ringeling}

\address{Alberto Alfarano: Axiom Math}\email{alberto@axiommath.ai}

\address{Pablo Bianucci:  Department of Physics, Concordia University, 7141 Sherbrooke St. W.
Montreal, QC  H4B 1R6, Canada} \email{pablo.bianucci@concordia.ca}

\address{Matilde Lal\'in:  D\'epartement de math\'ematiques et de statistique, Universit\'e de Montr\'eal. CP 6128, succ. Centre-ville. Montreal, QC H3C 3J7, Canada}\email{matilde.lalin@umontreal.ca}
\address{Berend Ringeling:  D\'epartement de math\'ematiques et de statistique, Universit\'e de Montr\'eal. CP 6128, succ. Centre-ville. Montreal, QC H3C 3J7, Canada}\email{bjringeling@gmail.com}

\begin{abstract}
Boyd conjectured that the Mahler measure of $P_k(x,y)=x+y+\frac{1}{x}+\frac{1}{y}+k$ for $k$ an integer, is given by $r_kL'(E_k,0)$, where $E_k$ is the elliptic curve associated to the zero locus of $P_k$ and $r_k$ is a rational number. We study various arithmetic properties of $r_k$ using a dataset containing the first $250{,}000$ values of $k$, combining
large-scale statistical analysis assisted by \textsc{Claude} with transformer-based
experiments carried out using \textsc{Axolver}.

We recover Boyd's observation that, apart from a few exceptions, $r_k$ is
the reciprocal of an integer. The size of this integer is governed by the conductor of the elliptic curve. Moreover, its $p$-adic valuations display markedly different behavior according
to the prime. For $p\geq 5$, the probability of
$v_p(r_k)=-m$
for $m\geq 1$ appears to be $p^{-m}$. For the primes  $2$ and $3$, however, we find additional arithmetic structure
involving congruence conditions on $k$ and the primes of bad reduction of
$E_k$. Although the neural networks do not predict $r_k$ exactly, they recover significant information about its magnitude and valuations. In particular, the experiments at the prime $2$ suggest arithmetic structure beyond the explicit predictor obtained from our statistical analysis.

\end{abstract}

\maketitle

The (logarithmic) Mahler measure of a nonzero rational function $P\in \C(x_1,\dots,x_n)$ is defined by
\begin{align*}
\mathrm{m}(P)=\frac{1}{(2\pi i)^n}\int_{\mathbb{T}^n} \log\left| P(x_1,\dots,x_n)\right|\frac{\dd x_1}{x_1}\cdots \frac{\dd x_n}{x_n},
\end{align*}
where $\mathbb{T}^n=\{(u_1,\dots,u_n)\in \C^n\, : \, |u_1|=\cdots=|u_n|=1\}$.
In the one variable polynomial case given by  $n=1$, Jensen's formula leads to an expression in terms of the roots of $P$. More precisely, if $P(x)= a \prod_j(x-\alpha_j)$, one can see that $\m(P)=\log |a| + \sum_j \log\max\{1,|\alpha_j|\}$.

However, for $n>1$, the nature of the values is more mysterious. In many cases where $P$ has rational coefficients, the Mahler measure has been related to values of functions with arithmetic significance.

Deninger~\cite{Deninger} predicted a connection of certain two-variable Mahler measures with special values of $L$ functions associated to elliptic curves via Be\u\i linson's conjectures, which are very general frameworks relating special values of $L$-functions to regulators, generalizing central statements such as the Birch and Swinnerton Dyer conjecture and the Dirichlet class number formula. These connections were verified by Boyd~\cite{Bo98} and Rodriguez-Villegas~\cite{RV} in many cases, most of them numerical, with a few formulas proven by Brunault, Mellit, Rogers and others \cite{Br06,La10,LR07,Me12,RZ12,RZ14,Zu14}.
One of the most studied families fitting the above framework is the one proposed by Boyd~\cite{Bo98}, given by
\[\mu_k:=\mathrm{m}\left(x+y+\frac{1}{x}+\frac{1}{y}+k\right),\]
where $k$ is an integral parameter.  Boyd computed $\mu_k$ for $k\in\Z\cap [0,40]$ and combined these observations with Deninger's predictions to conjecture that for  $k\in \Z\setminus\{-4,0,4\}$,
\begin{equation}\label{eq:conjecture}
\mu_k=r_k L'(E_k,0),
\end{equation}
where $E_k$ is the elliptic curve corresponding to the projective closure of the desingularization of
$x+y+\frac{1}{x}+\frac{1}{y}+k=0$, and $r_k$ is a nonzero rational number. In fact, apart from a few exceptions, it seems that $1/r_k$ is an integer. Table \ref{table-conj} shows $1/r_k$ for the first 40 values of $k$. The conductor $N_k$ is included in the table, since it determines the $L$-function.%
Moreover, the sign of $r_k$ should be determined by the root number $\omega_k$ of $E_k$, as the Mahler measure is positive. We remark, by using the change of variables $x\rightarrow -x$ and $y \rightarrow -y$ in the integral, that $\mu_k=\mu_{-k}$, and therefore it suffices to consider positive $k$.%
When $k=0, 4$, the zero loci of the corresponding Laurent polynomials have genus 0. In these cases, $\mu_0=0$ and $\mu_4=2L'(\chi_{-4},-1)$, the Dirichlet series on the character of conductor 4 \cite{Bo98}.

\begin{table}[h]
\centering
\begin{tabular}{ccc||ccc}
\hline
$k$ & $1/r_k$ & $N_k$ & $k$ & $1/r_k$ & $N_k$ \\
\hline
1  & 1     & 15   & 21 & $-12$  & 1785  \\
2  & 1     & 24   & 22 & 24     & 3432  \\
3  & 1/2   & 21   & 23 & 6      & 1311  \\
4  & --    & --   & 24 & 8      & 840   \\
5  & 1/6   & 15   & 25 & $-16$  & 3045  \\
6  & 2     & 120  & 26 & 128    & 17160 \\
7  & 2     & 231  & 27 & 12     & 2139  \\
8  & 1/4   & 24   & 28 & $-2$   & 336   \\
9  & 2     & 195  & 29 & $-24$  & 4785  \\
10 & $-8$  & 840  & 30 & 240    & 26520 \\
11 & $-8$  & 1155 & 31 & $-16$  & 3255  \\
12 & 1/2   & 48   & 32 & 1/3    & 42    \\
13 & $-4$  & 663  & 33 & $-204$ & 35409 \\
14 & 8     & 840  & 34 & 256    & 38760 \\
15 & $-24$ & 3135 & 35 & 224    & 42315 \\
16 & 1/11  & 15   & 36 & 2      & 240   \\
17 & $-24$ & 4641 & 37 & $-208$ & 50061 \\
18 & $-16$ & 1848 & 38 & 288    & 54264 \\
19 & $-40$ & 6555 & 39 & 336    & 58695 \\
20 & 2     & 240  & 40 & $-8$   & 1320  \\
\hline
\end{tabular}
\caption{Numerical values of $r_k$ and the conductor of $E_k$.\label{table-conj}}
\end{table}

Some of these identities have been proven, see Table \ref{table:proven_integer}.
\begin{table}[h]
\centering
\begin{tabular}{cccc}
\hline
$k$ & $1/r_k$ & Conductor of $E_k$ & Reference \\
\hline
1  & 1      & 15   & Rogers and Zudilin \cite{RZ14} \\
2  & 1      & 24   & Rogers and Zudilin \cite{RZ12} \\
3  & 1/2    & 21   & Brunault \cite{Brunault-Siegel}; Lal\'in, Samart, and Zudilin \cite{LSZ16} \\
5  & 1/6    & 15   &  Lal\'in \cite{La10} from $k=1$ \\
8  & 1/4    & 24   & Lal\'in and Rogers \cite{LR07} from $k=2$\\
12 & 1/2    & 48   & Brunault \cite{Brunault-Siegel} \\
16 & 1/11   & 15   & Lal\'in \cite{La10} from $k=1$ \\
\hline
\end{tabular}
\caption{Proven formulas for integer values of $k$.}
\label{table:proven_integer}
\end{table}

Understanding the nature of these identities would shed light on our understanding of special values of $L$ functions and Be\u\i linson's conjectures. The following is a natural question to pose.
\begin{qu}
    How does $r_k$ relate to $k$ and to the elliptic curve $E_k$?
\end{qu}

We remark that $r_k$ should be predicted by the Bloch--Kato conjectures. Very recent work by Dummigan, Golyshev, de Jeu and Kerr \cite{DGJK} investigates the $2$-part of the Bloch--Kato conjecture for this family of curves.

We have computed the first $250{,}000$ values of $\mu_k$ and explored this question in two different ways: by analyzing the data with the assistance of  \textsc{Claude} and by training neural networks on the  problem via \textsc{Axolver}%
, with the goal of predicting certain properties of $r_k$.

The data analysis
revealed several arithmetic regularities
exhibited by $r_k$. After accounting for the sign, which is governed by the root number, the reciprocal $n_k=1/r_k$ is almost always an integer. Its order of magnitude is explained by the conductor through the estimate
\[
        |n_k|\asymp \frac{N_k}{\log (k)},
\]
while its $p$-adic valuations show different behaviors depending on the prime. For primes $p\geq 5$, the data suggest the simple law
\[
        \mathbb{P}(v_p(n_k)=r)\stackrel{?}{=}p^{-r}\qquad (r\geq 1).
\]
The primes $2$ and $3$, however, have a different behavior. The $3$-adic valuation seems to be strongly influenced by congruence conditions on $k$ and by the parity of the number of bad primes congruent to $1\pmod 3$. The $2$-adic valuation appears to be  well modeled by a deterministic term depending on primes of bad reduction plus an apparently random negative-binomial error term.%
More precisely, we observe that
\[v_2(n_k) \overset{?}{=} \omega_{\mathrm{odd}}(k) + 2\,\omega_{\mathrm{odd}}(k^2-16) - c(k) + s(k) + X_k,\]
where $\omega_{\mathrm{odd}}(n)$ counts the odd primes dividing $n$, and
$c(k)$ and $s(k)$ are small bounded correction terms that codify the behavior of $E_k$ at $2$, defined in \eqref{eq:c-defn} and \eqref{eq:s-defn}, respectively.
Finally, the empirical distribution of the residual $X_k$ is well approximated by the negative-binomial distribution
 $\mathrm{NB}\!\left(r=3,\;p=\tfrac{3}{5}\right)$. We emphasize that this is a heuristic observation rather than a precise conjecture. The work of Dummigan, Golyshev, de Jeu and Kerr \cite{DGJK}
provides a formula for $v_2(n_k)$ for an infinite, positive-density
family of $k$, conditionally on the Bloch--Kato conjecture. The formula
depends on the 2-valuation of a Selmer group associated to an elliptic curve 2-isogenous to $E_k$
as well as on a regulator index
$\iota_{k/4,2}\in\{1,2\}$, which measures the index of an explicit
motivic cohomology class in the rank-one $\Z_{(2)}$-lattice determined
by the $2$-adic regulator. We briefly examine this connection in Section \ref{ssec:lowboundproof}.

The experiments showed us that \textsc{Axolver} is able to recover part of this arithmetic structure from the data, but that the full problem of predicting $r_k$ remains very difficult. While the model is unable to completely learn $r_k$, it  does learn meaningful information about its size, with the best behavior occurring when the conductor is included among the input features. In particular, the model begins to reproduce the observed distribution of $L(E_k,2)$. The valuation experiments are more mixed. For $p=5$ and $p=7$, the model tends to collapse to the most frequent output $v_p(n_k) = 0$, so these experiments do not yet reveal useful structure. By contrast, the experiments for $v_2(n_k)$ and $v_3(n_k)$ show that the network can exploit arithmetic features such as the factorization of the discriminant or conductor, giving evidence that the statistical patterns observed in Section~\ref{sec:statistics} are visible to a learning model. In the case of the prime $2$, the model's learning seems to go beyond the understanding that we can derive from the data  (see Section \ref{sec:secondplateau}).

This article is organized as follows.
Section \ref{sec:background} presents the general mathematical background for the problem. Section \ref{sec:data-generation} briefly discusses the generation of the data. The statistical observations are presented in Section \ref{sec:statistics}. Section \ref{sec:model} describes the model and the experiments run in \textsc{Axolver}, while  Section \ref{sec:machine} contains a detailed discussion on the results of those experiments. Section \ref{sec:conclusions} summarizes our findings and discusses directions for further research, while Section \ref{sec:appendix} contains an appendix with supplementary tables and figures.

\section*{Acknowledgments} This project was initiated during the workshop ``Using AI to find examples'' that took place in April 2026 at the Centre de recherches math\'ematiques. The authors are grateful to the organizers, particularly Andrew Granville, and to the CRM for the excellent working conditions. The authors are thankful to Fran\c cois Charton, Christopher Deninger, Jordan Ellenberg, Dimitris Koukolopoulos,  Shousen Lu, Riccardo Pengo, and Wadim Zudilin for several helpful discussions. Part of this work was conducted during the workshop ``AI and number theory" that took place in May 2026 at the American Institute of Mathematics. The authors thank the organizers and Anthropic for providing access and guidance to \textsc{Claude Max} during this event.

This work was partially supported by the Natural Sciences and Engineering Research Council of Canada (RGPIN-2019-06988 to PB, RGPIN-2022-03651 to ML), and the Fonds de recherche du Qu\'ebec - Nature et technologies (Projet de recherche en \'equipe 345672 to ML), the  Institut des sciences math\'ematiques (postdoctoral fellowship to BR),  the Centre de recherches math\'ematiques (postdoctoral fellowship to BR).

\section*{Declaration of Generative AI and AI-Assisted Technologies} During the preparation of this manuscript, the authors used artificial intelligence (AI) tools to assist with data analysis, machine learning modeling, and manuscript editing:

\begin{itemize}\item Data Analysis: Anthropic's \textsc{Claude} model was utilized to process, clean, and identify patterns within the research dataset.

\item Machine Learning: Axiom's \textsc{Axolver} framework was used to train sequence-to-sequence neural networks and execute mathematical/symbolic reasoning tasks within the study's machine learning pipeline.

\item Writing and Editing: OpenAI's \textsc{ChatGPT} was used  to improve the readability, grammar, and style of the author-drafted text.
\end{itemize}

The original research framework, data collection, and final interpretations were entirely conceptualized and directed by the human authors. After using these tools, the authors reviewed, verified, and edited all outputs to ensure accuracy and eliminate potential hallucinations. The authors accept full accountability for the content and integrity of the final work.

\section*{Data Availability Statement}
The code used in this study is available in the \textsc{GitHub} repository
\url{https://github.com/BerendRingeling/Mahler}. The datasets are available
through the \textsc{Dropbox} link provided in that repository.

\section{Background and generalities} \label{sec:background}

The plane curve defined by the equation
\[x+\frac{1}{x} + y + \frac{1}{y} + k=0,\]
where $k$ is a parameter, is birationally equivalent to an elliptic curve (over $\Q(k)$). This can be seen by applying, for example, the birational change of variables
\[
x=\frac{8(k^2X+8Y)}{k(k^2-16X)},
\qquad
y=\frac{8(k^2X-8Y)}{k(k^2-16X)},
\]
with inverse
\[
X=-\frac{k^2xy}{16},
\qquad
Y=\frac{k^3(xy+1)(x-y)}{128},
\]
which
leads to the Weierstrass equation
\begin{equation}\label{eq:Weierstrass}
E_k: Y^2 = X\left(X^2 + \frac{k^2}{8}\left(\frac{k^2}{8} - 1\right)X + \frac{k^4}{16^2}\right).
\end{equation}
For fixed $k$, this curve is nonsingular provided that $k\neq 0,\pm4$.

If an elliptic curve $E$ is defined over $\Q$, one can construct its $L$-function as follows
\[L(E,s) = \prod_{\mathrm{good}\, p} (1-a_p p^{-s}+p^{1-2s})^{-1} \prod_{\mathrm{bad}\, p} (1-a_p p^{-s})^{-1}=\sum_{n=1}^\infty \frac{a_n}{n^s},\]
where for $p$ prime,
\[a_p = 1+p-\#E(\F_p)\]
denotes the trace of Frobenius.
Here the bad $p$ are the primes where the corresponding reduction of $E$ modulo $p$ is singular, and the good $p$ are all the other primes. The bad primes are exactly those that divide the conductor $N$ of the elliptic curve.

The Weierstrass model \eqref{eq:Weierstrass} is defined over $\Q(k^2)$. This observation prompted Rodriguez-Villegas \cite{RV} to conjecture that equation \eqref{eq:conjecture} extends to the case where $k^2$ is a positive integer and to prove the cases $k=2\sqrt{2}, 4\sqrt{2}$. A word of caution here is that, since $k \not \in \Q$, and any change of variables will be defined over $\Q(k)$, the choice of the model does make a difference. A few more cases where $k^2\in \Z$ have been proven, see Table \ref{table:proven_nonrational}.
\begin{table}[h]
\centering
\begin{tabular}{cccc}
\hline
$k$ & $1/r_k$ & Conductor of $E_k$ & Reference \\
\hline
$\sqrt{2}$     & 4  & 56  & Zudilin \cite{Zu14} \\
$2\sqrt{2}$   & 1    & 32  & Rodriguez-Villegas \cite{RV} \\
$3\sqrt{2}$   & 2/5    & 24  & Lal\'in \cite{La10} from $k=2$  \\
$4\sqrt{2}$    & 1    & 64  & Rodriguez-Villegas \cite{RV} \\
$i$            & 1/2  & 17  & Zudilin \cite{Zu14} \\
$2i$           & 1    & 40  & Mellit \cite{Mellit-Oberwolfach} \\
$3i$           & 1/5    & 15  &  Lal\'in \cite{La10} from $k=1$ \\
$i\sqrt{2}$   & 2/3    & 24  & Lal\'in \cite{La10} from $k=2$  \\
\hline
\end{tabular}
\caption{Proven formulas for quadratic irrational and non-real values of $k$.}
\label{table:proven_nonrational}
\end{table}

In the present work, however, we restrict our statistical and  machine-learning analysis to integral values of $k$. This is the setting of Boyd's original numerical experiments.
This restriction is also convenient computationally: generating data for $k=1,\ldots,2B$ appears slightly faster than carrying out the corresponding computations for all integral values $|m| \leq B$, where $m=k^2$ is no longer required to be a square. We briefly examine the nonsquare case in Section~\ref{sec:sqrtm}, where the near-integrality, and valuation laws are found to persist in many cases.

The motivation for conjecturing formulas such as \eqref{eq:conjecture} comes from Be\u\i linson's conjectures, as explained in \cite{Bo98,Deninger,RV}.

\section{Numerical generation of data}
\label{sec:data-generation}

In order to generate the data needed to train the model, we need to compute both sides of equation \eqref{eq:conjecture}.
The computation of $L'(E_k,0)$ can be performed to very high precision in
\textsc{PARI/GP} \cite{PARI}. For the Mahler measure we have the following explicit formulas for real $k$
(see \cite[p.~2324]{RZ14}):
\begin{equation}
\label{HG}
\mu_k =
\begin{cases}
\log |k| - \dfrac{2}{k^2}\,
{}_4F_3\!\left(\frac{3}{2}, \frac{3}{2}, 1, 1;\, 2, 2, 2;\, \frac{16}{k^2}\right),
& \text{if } |k| \ge 4, \\[6pt]
\dfrac{|k|}{4}\,
{}_3F_2\!\left(\frac{1}{2}, \frac{1}{2}, \frac{1}{2};\, \frac{3}{2}, 1;\, \frac{k^2}{16}\right),
& \text{if } |k| < 4 .
\end{cases}
\end{equation}
Here ${}_4F_3$ and ${}_3F_2$ denote hypergeometric functions. This explicit
description allows one to compute $\mu_k$ to high precision for
$k \in \Z \setminus\{-4, 0, 4\}$. Both $\mu_k$ and $L'(E_k,0)$ were computed using $19$ decimal digits of precision.

Using the LLL algorithm (implemented as \texttt{lindep} in \textsc{PARI/GP}
\cite{PARI}), we can then quickly numerically recognize the rational quotient $r_k$.

We computed the values $r_k$ for $1 \le k \le 250{,}000$ (with $k \neq 4$).
In total this required about \SI{9.6e7}{\second} ($\approx 3$ years) of
single-core-equivalent computation time, with the average computation time per value rising
from roughly \SI{35}{\second} for small $k$ to about \SI{16.5}{\minute} for
$k$ near 250{,}000. The computations were carried out on 21 CPU cores
across 17 servers with a combined \SI{2108}{\giga\byte} ($\approx
\SI{2.1}{\tera\byte}$) of memory.

\section{Statistical observations of the data}
\label{sec:statistics}

We analyzed the values $r_k$ for $1 \le k \le 250{,}000$. First of all, since the Mahler measure is always positive, the sign of $r_k$ is determined by the sign of $L'(E_k,0)$, which, by the functional equation of the $L$-function, is determined by the root number $\omega_k$ of $E_k$. Secondly, for all  but a few exceptional values of $k$ in the dataset,%
namely for
\begin{equation}\label{eq:intcond}
k \notin \{3,4,5,8,12,16,32\},
\end{equation}
the quantity $r_k$ is conjecturally the reciprocal of an integer.
Inspired by this, we write $n_k =  1/r_k$, so that $n_k$ is an integer for all the $k$ satisfying \eqref{eq:intcond}. We make the following empirical observations on the size of $n_k$ and its $p$-adic valuations for primes $p$ for these $k$.

\subsection{The size of $|n_k|$}
We can use the approximation for $\mu_k$ and $L'(E_k,0)$ to find an estimate for $|n_k|$.
Using the hypergeometric description for the Mahler measure \eqref{HG}, we find
\begin{equation}
\label{mmk}
\mu_k = \log(k) + o(1)
\end{equation}
as $k \to \infty$. For $L'(E_k,0)$, we use the functional equation to arrive at
\begin{equation}
\label{lvk}
L'(E_k,0) = \omega_k \frac{N_k}{4 \pi^2 }L(E_k,2),
\end{equation}
where $N_k$ is the conductor of $E_k$ and $\omega_k$ the root number. Using the following lemma, we can estimate the size of $L(E_k,2)$.

\begin{lem}
\label{lem:lvbound}
For any elliptic curve $E$ over $\Q$, we have the bounds
    \[ (0.2117\ldots =) \, \frac{\zeta(3)^2}{\zeta(3/2)^2} \leq L(E,2) \leq \zeta(3/2)^2 \, (= 6.8245\ldots).\]
\end{lem}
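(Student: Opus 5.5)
We will work with the Euler product of $L(E,s)$ at $s=2$, which converges absolutely there: the local factors at good primes satisfy $|a_p|\le 2\sqrt p$ by Hasse's bound. The plan is to bound each local factor from above and from below by the corresponding factor of an explicit zeta quotient, and then multiply.

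\textbf{Good primes.} Write $1-a_pp^{-s}+p^{1-2s}=(1-\alpha_p p^{-s})(1-\beta_p p^{-s})$ with $|\alpha_p|=|\beta_p|=\sqrt p$. At $s=2$ each factor $1-\alpha_p p^{-2}$ is a complex number with $|\alpha_p p^{-2}|=p^{-3/2}<1$. Hence
\[
1-p^{-3/2}\le |1-\alpha_pp^{-2}|\le 1+p^{-3/2}.
\]
Since the local factor is real and positive (because $\beta_p=\overline{\alpha_p}$), it follows that
\[
(1+p^{-3/2})^{-2}\le (1-a_pp^{-2}+p^{-3})^{-1}\le (1-p^{-3/2})^{-2}.
\]

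\textbf{Bad primes.} Here $a_p\in\{0,\pm1\}$, so the factor $(1-a_pp^{-2})^{-1}$ lies between $(1+p^{-2})^{-1}$ and $(1-p^{-2})^{-1}$. These bounds are weaker than the good-prime bounds, since $p^{-2}<p^{-3/2}$. Indeed $(1-p^{-2})^{-1}\le(1-p^{-3/2})^{-2}$, and $(1+p^{-2})^{-1}\ge(1+p^{-3/2})^{-2}$ holds because $(1+p^{-3/2})^2\ge 1+p^{-2}$. So every prime, good or bad, satisfies the uniform bounds
\[
(1+p^{-3/2})^{-2}\le L_p(E,2)\le(1-p^{-3/2})^{-2}.
\]

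\textbf{Taking products.} The product over all primes of $(1-p^{-3/2})^{-2}$ is $\zeta(3/2)^2$, which gives the upper bound. For the lower bound we use the identity $1+p^{-3/2}=(1-p^{-3})/(1-p^{-3/2})$, so that
\[
\prod_p(1+p^{-3/2})^{-2}=\prod_p\frac{(1-p^{-3/2})^2}{(1-p^{-3})^2}=\frac{\zeta(3)^2}{\zeta(3/2)^2}.
\]
All these products converge absolutely, so taking the product of the local inequalities is justified.

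The only delicate point is handling the bad primes and checking that their factors fit inside the same uniform bounds; the comparison above settles this. The numerical values $0.2117\ldots$ and $6.8245\ldots$ are then routine evaluations.
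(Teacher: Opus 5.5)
Your proof is correct and follows the paper's argument. Both bound each Euler factor at $s=2$ between $(1+p^{-3/2})^{-2}$ and $(1-p^{-3/2})^{-2}$, using Hasse's bound at good primes and $a_p\in\{0,\pm1\}$ at bad primes, and then multiply over all primes. The differences are cosmetic: you factor the good-prime polynomial through $\alpha_p,\overline{\alpha_p}$ rather than expanding $1-a_pp^{-2}+p^{-3}$ directly, and you spell out the identity $1+p^{-3/2}=(1-p^{-3})/(1-p^{-3/2})$ that produces the constant $\zeta(3)^2/\zeta(3/2)^2$.
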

\begin{proof}
We write the $L$-value in terms of its local factors
\[
L(E, 2) = \prod_{\text{primes }p} L_p(E,2),
\]
where
\[
L_p(E,2) = \begin{cases}(1-a_pp^{-2}+p\cdot p^{-4})^{-1},&p\nmid N\\ (1\pm p^{-2})^{-1}
,&p\|N\\ 1,& p^2|N,\end{cases}
\]
where $N$ is the conductor of $E$.
For good primes (i.e. $p \nmid N$), we have, by the Hasse-Weil bound, $|a_p| \leq 2 \sqrt{p}$, and therefore,
\[
(1 - p^{-3/2})^2 = 1 - 2p^{-3/2} + p^{-3} \leq 1-a_pp^{-2}+p\cdot p^{-4} \leq 1 + 2p^{-3/2} + p^{-3} = (1 + p^{-3/2})^2.
\]
Taking reciprocals,
\[
\frac{1}{(1 + p^{-3/2})^2} \leq L_p(E,2) \leq \frac{1}{(1 - p^{-3/2})^2}.
\]
For $p \| N$, the same estimates hold:
\[
(1-p^{-3/2})^2 \leq 1 - p^{-2} \leq 1 \pm p^{-2} \leq 1 + p^{-2} \leq (1 + p^{-3/2})^2,
\]
where the outer inequalities can be easily verified. Finally, for $p^2 | N$, it is immediate that these bounds hold. Taking the product over all primes $p$, we conclude that
\[
\frac{\zeta(3)^2}{\zeta(3/2)^2} = \prod_{\text{primes }p} \frac{1}{(1 + p^{-3/2})^2} \leq L(E,2) \leq \prod_{\text{primes }p} \frac{1}{(1 - p^{-3/2})^2} = \zeta(3/2)^2.
\]
\end{proof}

As a consequence of \eqref{mmk}, \eqref{lvk} and Lemma \ref{lem:lvbound}, we conclude that
\begin{equation}
\label{orderofnk}
|n_k| = \frac{|L'(E_k,0)|}{\mu_k} \asymp \frac{N_k}{\log(k)}.
\end{equation}
Thus, the order of magnitude of $n_k$ is completely determined by $k$ and the conductor $N_k$. In Figure~\ref{fig:distrL_E_2} we see the distribution of $L(E_k,2)$ in our dataset.
\begin{figure}[!htbp]
    \centering
   \includegraphics[scale = 0.5]{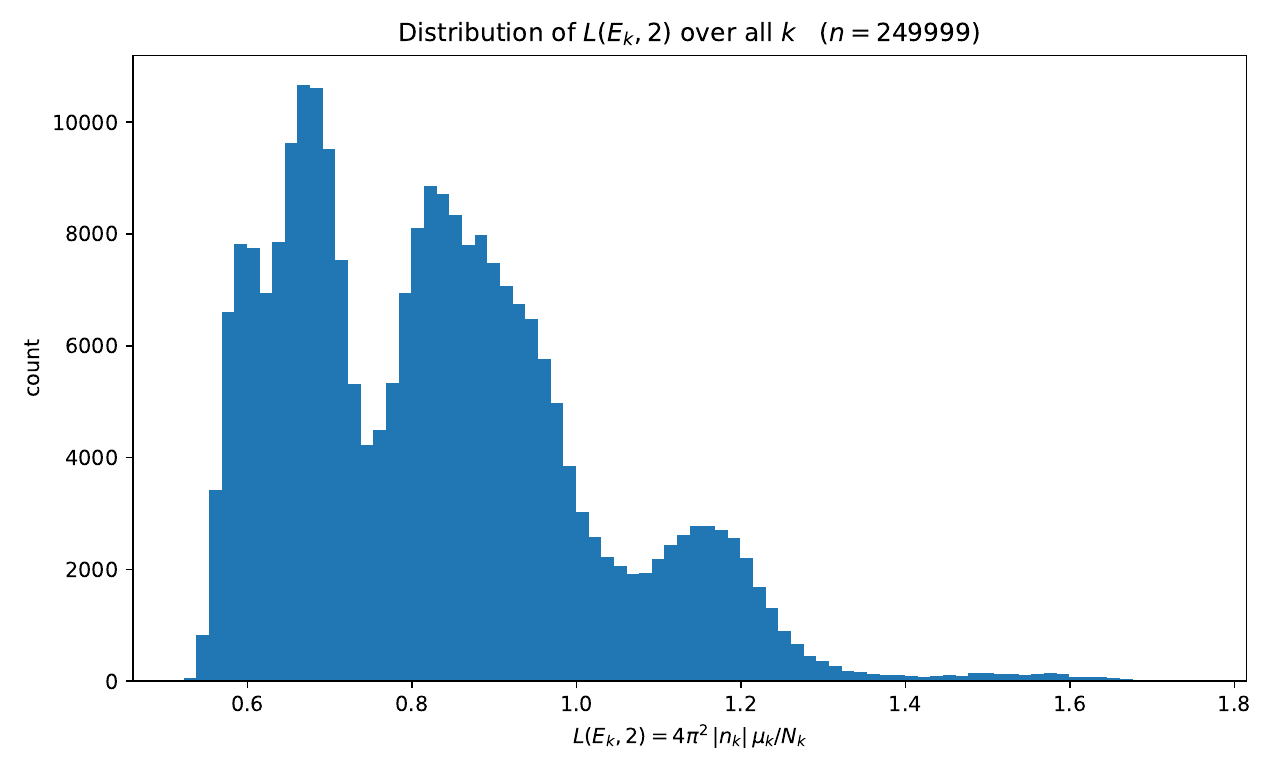}
    \caption{Values of $L(E_k, 2)$ for $k \leq 250{,}000$ in our dataset.}%
    \label{fig:distrL_E_2}
\end{figure}

Although Lemma \ref{lem:lvbound} only forces $(0.2117,6.8245)$, the values we observe are in a narrower range $(0.5222, 1.7532)$, with mean value $0.8326$. The exact distribution of $L(E_k,2)$ is not known to us; here we record only its empirical behavior and its role in governing the size of $n_k$ through \eqref{orderofnk}. We do not assert any direct relation between this distribution and the $p$-adic valuations of $n_k$ considered below.

\subsection{$p$-adic observations for primes $p \geq 5$}
\label{subsection:big_p-adic}

We observe empirically that
\[
\mathbb P\bigl(v_p(n_k)\geq 1\bigr)=\frac{1}{p-1},
\]
rather than the probability \(1/p\) predicted by a naive uniform-distribution model modulo \(p\). Conditional on \(p\mid n_k\), however, the higher valuations appear to follow the usual geometric distribution:
\[
\mathbb P\bigl(v_p(n_k)=r\mid v_p(n_k)\geq 1\bigr)
=
\left(1-\frac{1}{p}\right)p^{-(r-1)},
\qquad r\geq 1.
\]
Equivalently,
\[
\mathbb P\bigl(v_p(n_k)=r\bigr)=\frac{1}{p^r},
\qquad r\geq 1.
\]
Thus, the unexpected feature is the enhanced probability \(1/(p-1)\) of divisibility by \(p\). Once \(p\mid n_k\), the distribution of the higher valuation agrees with that of a random \(p\)-adic integer conditioned to be divisible by \(p\). Experimental results for several primes \(p\) and valuations \(r\) are recorded in Table~\ref{tab:v5_v7_v11_v13_v17_v19_v23_v29_v31_v37_statistics_cumulative}.

To quantify how well our data matches with the proposed random model, we compute $\chi^2$. Concretely, for each prime $p$, we group the values $v_p(n_k)$ into finitely many bins. For instance, for $p = 5$, we use the seven bins $v_5(n_k) = r$ for $0 \leq r \leq 5$ and $v_5(n_k) \geq 6$. If $O_i$ is the observed number of values in bin $i$, and $E_i$ is the number predicted by the random model, then
\[
\chi^2=\sum_i \frac{(O_i-E_i)^2}{E_i}.
\]
Thus a large value of $\chi^2$ indicates a large deviation from the proposed random model. Furthermore, it is natural to consider the normalized statistic $\chi^2/\nu$,%
where
\[
\nu = \#\{\text{Bins used in the $\chi^2$-test}\} -1.
\]
In Table \ref{tab:chi2-vs-prime} , we compare the positive $p$-valuations of our observed data with the random model $\mathbb{P} (v_p(n_k) = r) = p^{-r}$, for $r \geq 1$ (Throughout this section, $\mathbb P$ denotes the empirical proportion obtained
by sampling $k$ uniformly from the indicated set of parameters.)%
For each prime $p \geq 5$, the table gives the resulting chi-square statistic, the number of degrees of freedom, and the normalized statistic $\chi^2/\nu$. It can be seen that the prime $p = 5$ shows a strong deviation from this random model.
The exact counts are recorded in Table \ref{tab:v5-exact}. The prime $p=37$ also shows a
noticeable discrepancy, although it is substantially smaller.

\begin{table}[H]
\centering
\begin{minipage}[t]{0.42\linewidth}
\centering
\begin{tabular}{r r r r}
\toprule
$p$ & $\chi^2$ & $\nu$ & $\chi^2/\nu$\\
\midrule
5  & 76.84 & 6 & 12.81\\
7  & 1.56  & 5 & 0.31\\
11 & 8.32  & 4 & 2.08\\
13 & 6.42  & 4 & 1.60\\
17 & 1.49  & 3 & 0.50\\
19 & 4.10  & 3 & 1.37\\
23 & 3.37  & 3 & 1.12\\
29 & 3.41  & 3 & 1.14\\
31 & 7.33  & 3 & 2.44\\
37 & 14.27 & 3 & 4.76\\
41 & 2.60  & 2 & 1.30\\
43 & 2.41  & 2 & 1.20\\
47 & 1.38  & 2 & 0.69\\
\bottomrule
\end{tabular}
\subcaption{Comparison of $v_p(n_k)$ to $\mathbb{P}(v_p=r)=p^{-r}$,
primes $5\le p \le47$. }
\label{tab:chi2-vs-prime}
\end{minipage}
\hfill
\begin{minipage}[t]{0.54\linewidth}
\centering
\begin{tabular}{c r r c}
\toprule
$v_5(n_k)= r$ & count & expected & ratio\\
\midrule
0 & $186{,}238$ & $187{,}499.3$ & $0.993$ \\
1 &  $50{,}398$ &  $49{,}999.8$ & $1.008$ \\
2 &  $10{,}651$ &  $10{,}000.0$ & $1.065$ \\
3 &   $2{,}141$ &   $2{,}000.0$ & $1.071$ \\
4 &         $471$ &         $400.0$ & $1.178$ \\
5 &          $78$ &          $80.0$ & $0.975$ \\
6 &          $19$ &          $16.0$ & $1.188$ \\
7 &           $3$ &           $3.2$ & $0.938$ \\
\bottomrule
\end{tabular}
\subcaption{ Counts with $v_5(n_k)=r$ vs the expected $M\cdot 5^{-r}$, where $M:=249{,}999$  (the expected total at the $r=0$ row is $M\cdot\tfrac34$).}
\label{tab:v5-exact}
\end{minipage}
\caption{The law $\mathbb{P}(v_p=r)=p^{-r}$ ($r\ge1$) compared to our observed data. Left: reduced $\chi^2$ across primes, only $p=5$ deviates strongly. Right: exact counts $v_5(n_k)=r$.}
\label{tab:chi2-and-v5-exact}
\end{table}

\subsection{$3$-valuation observations}
The $3$-adic behavior of $n_k$ does not seem to follow the same pattern as the one described in Section~\ref{subsection:big_p-adic}. We record several observations, including parity laws for the $3$-valuations.

\subsubsection*{Globally higher $3$-divisibility.}

Divisibility by $3$ appears to occur considerably more frequently than the corresponding heuristic would suggest with $v_3(n_k)\geq 1$ about $73\%$ of the time; see  Table~\ref{tab:v3_statistics_cumulative} and Figure~\ref{fig:histogram_v3_0_250K}. More specifically, the proportions of $k$ for which $v_3(n_k)=r$ with $r \geq 2$ appear to be elevated (by at least a factor of $2$) compared with the baseline prediction $3^{-r}$. On the other hand, the proportion with $v_3(n_k)=1$ appears to be slightly smaller than this baseline: for instance, our data give approximately $0.322$, compared with $1/3 = 0.333\ldots$ expected from the model for $p\geq 5$. This excess is not an artifact of
the congruence phenomena discussed below.

\subsubsection*{$3$-Divisibility among congruence classes.}

The $3$-valuation is further structured by congruences. The proportion of $k$ with
$v_3(n_k)\geq r$ is systematically higher for $k\equiv 0\pmod 3$; see
Table~\ref{tab:v3_geq_mod3_statistics_congruence_cumulative}. Refining modulo $27$, we
find a stronger statement: for $k\equiv 0,\pm 5\pmod{27}$
one has $v_3(n_k)\geq 1$ for all but a few exceptional $k$ in the dataset.

\medskip
\noindent\emph{Bad primes of $E_k$ and $3$-divisibility.} Another observation is that there seems to be a correlation between the parity of the cardinality of the set
\[
S_k := \{ \text{bad primes of the form }p \equiv 1 \, \pmod 3 \text{ for the curve }E_k \}
\]
and the $3$-valuation of $n_k$, see Table \ref{tab:contingency_parity_S_T}. The table suggests that it is more likely for $n_k$ to be divisible by $3$ when the parity of $\#S_k$ is even: We have $\mathbb{P}(v_3(n_k) \geq 1 \, | \, \# S_k \text{ is even}) = 0.79$, compared to $\mathbb{P}(v_3(n_k) \geq 1 \, | \, \# S_k \text{ is odd}) = 0.67 $.%
If we compare this to the analogous set
\[
T_k := \{ \text{bad primes of the form }p \equiv 2 \, \pmod 3 \text{ for the curve }E_k \},
\]
this effect disappears. We have $\mathbb{P}(v_3(n_k) \geq 1 \, | \, \# T_k \text{ is even}) = 0.73$, compared to $\mathbb{P}(v_3(n_k) \geq 1 \, | \, \# T_k \text{ is odd}) = 0.73$.
This phenomenon seems to be a consequence of a sharper law on a subfamily, which we will now describe.

\begin{table}[ht]
  \centering
  \small
  \setlength{\tabcolsep}{4pt}

  \begin{tabular}{lrrr}
    \toprule
    & $v_3(n_k) \leq 0$ & $v_3(n_k)\geq 1$ & total \\
    \midrule
    $\#S_k$ even
    & \countpct{$26{,}170$}{$20.9$}
    & \countpct{$98{,}896$}{$79.1$}
    & $125{,}066$ \\
    $\#S_k$ odd
    & \countpct{$40{,}811$}{$32.7$}
    & \countpct{$84{,}122$}{$67.3$}
    & $124{,}933$ \\
    \midrule
    total
    & \countpct{$66{,}981$}{$26.8$}
    & \countpct{$183{,}018$}{$73.2$}
    & $249{,}999$ \\
    \bottomrule
  \end{tabular}
\qquad
  \begin{tabular}{lrrr}
    \toprule
    & $v_3(n_k) \leq 0$ & $v_3(n_k)\geq 1$ & total \\
    \midrule
    $\#T_k$ even
    & \countpct{$33{,}406$}{$26.7$}
    & \countpct{$91{,}600$}{$73.3$}
    & $125{,}006$ \\
    $\#T_k$ odd
    & \countpct{$33{,}575$}{$26.9$}
    & \countpct{$91{,}418$}{$73.1$}
    & $124{,}993$ \\
    \midrule
    total
    & \countpct{$66{,}981$}{$26.8$}
    & \countpct{$183{,}018$}{$73.2$}
    & $249{,}999$ \\
    \bottomrule
  \end{tabular}

  \caption{Contingency tables comparing the divisibility of $n_k$ by $3$ with the parities of $\#S_k$ and $\#T_k$. Percentages are according to the parity of
$\#S_k$ and $\#T_k$.}
  \label{tab:contingency_parity_S_T}
\end{table}

\subsubsection{A parity law on a subfamily}
\label{ssec:partitylaw}

On the set
\begin{equation}
\label{def:setA}
A := \{ k \, \colon \, 3\nmid k, \text{ and } k(k^2-16)\text{ squarefree} \},
\end{equation}
the above-mentioned correlations sharpen into two laws, valid (with the exception of $k = 1$):
\begin{equation}\label{eq:thelawforA}
v_3(n_k)=0 \Longrightarrow \#S_k \text{ is odd},
\qquad
v_3(n_k)=1 \Longrightarrow \#S_k \text{ is even}.
\end{equation}
see Tables \ref{tab:parity-law} and \ref{tab:law-contingency}. Here it can be seen that a similar law does not seem to appear for $v_3(n_k) \geq 2$.

\begin{table}[ht]
  \centering
  \begin{tabular}{rrrrr}
    \toprule
    $v_3(n_k)$ & count & $\#S_k$ even & $\#S_k$ odd & $\mathbb{P}(\#S_k\ \mathrm{odd})$ \\
    \midrule
    0 & $13{,}080$ & $1$ & $13{,}079$ & $1.000$ \\
    1 & $13{,}233$ & $13{,}233$ & $0$ & $0.000$ \\
    2 & $8{,}950$ & $4{,}438$ & $4{,}512$ & $0.504$ \\
    3 & $4{,}161$ & $2{,}055$ & $2{,}106$ & $0.506$ \\
    4 & $1{,}560$ & $780$ & $780$ & $0.500$ \\
    5 & $552$ & $266$ & $286$ & $0.518$ \\
    6 & $194$ & $95$ & $99$ & $0.510$ \\
    7 & $64$ & $31$ & $33$ & $0.516$ \\
    8 & $22$ & $10$ & $12$ & $0.545$ \\
    9 & $9$ & $5$ & $4$ & $0.444$ \\
    10 & $3$ & $1$ & $2$ & $0.667$ \\
    11 & $1$ & $0$ & $1$ & $1.000$ \\
    12 & $1$ & $1$ & $0$ & $0.000$ \\
    \bottomrule
  \end{tabular}
    \caption{Parity of $\#S_k$ as a function of $v_3(n_k)$, for $k \in A$. The law $v_3(n_k)=0\Rightarrow\#S_k$ odd and $v_3(n_k)=1\Rightarrow\#S_k$ even holds with a single exception; for $v_3(n_k)\ge2$ the parity is unconstrained.}
        \label{tab:parity-law}
\end{table}

\begin{table}[ht]
  \centering
  \begin{tabular}{lrrr}
    \toprule
               & $v_3(n_k) = 0 $ & $v_3(n_k) \geq 1 $ & total \\
    \midrule
    $\#S_k$ even  & $1$ & $20{,}915$ & $20{,}916$ \\
    $\#S_k$ odd   & $13{,}079$ & $7{,}835$ & $20{,}914$ \\
    \midrule
    total       & $13{,}080$ & $28{,}750$ & $41{,}830$ \\
    \bottomrule
  \end{tabular}
    \caption{Contingency of $\mathrm{parity}(\#S_k)$ against $v_3(n_k) \geq 1$ for $k \in  A$.}
  \label{tab:law-contingency}

\end{table}

It is apparent that law \eqref{eq:thelawforA} is a special case of a more general law that holds on a larger set. Define
\[
\alpha(k) := \#S_k + v_3(k) + \mathbbm{1}_{8 \mid k}.
\]
The two laws are
\begin{subequations}\label{eq:law-general}
\begin{align}
\text{(odd)}&\qquad v_3(n_k)=0 \ \Longrightarrow\ \alpha(k)\ \text{is odd},
   \label{eq:law-fwd}\\
\text{(even)}&\qquad v_3(n_k)=1 \ \Longrightarrow\ \alpha(k)\ \text{is even}.
   \label{eq:law-conv}
\end{align}
\end{subequations}
On $A$ we have $k^2-16$ is squarefree, $v_3(k) = 0$ and $8 \nmid k$, so \eqref{eq:law-general} reduces to \eqref{eq:thelawforA}. We now investigate several enlargements of the set $A$ for which the laws \eqref{eq:law-general} hold, see Table~\ref{tab:sets} and Figure~\ref{fig:placeholder}. The results in Table~\ref{tab:sets} suggest that, among the sets considered there, the largest set on which the  odd law~\eqref{eq:law-fwd} holds is
\[
C' := \{p^2 \nmid k^2 - 16 \text{ for all }p \equiv 3 \pmod 4 \} \setminus \{k \equiv 4 \pmod 8 \}.
\]
On this set, however, the even law \eqref{eq:law-conv} has $1{,}746$ exceptions. Of these, $1{,}597$ are divisible by $3^3$, and hence satisfy $v_3(n_k) \geq 1$ by the earlier remarks. For the remaining $149$, the integer $k(k^2-16)$ is divisible by one of $\{7^3, 13^3, 19^3, 37^3, 61^3 \}$, all cubes of primes congruent to $1 \pmod 3$. This suggests that both laws \eqref{eq:law-general} may hold on the smaller set
\[
C'' = \{k \in C' \, \colon\, p^3 \mid k(k^2-16) \Rightarrow p \equiv 2 \pmod 3 \}.
\]
Finally, it is worth noting that the $k$ such that $k \equiv 0 \pmod 27$ or $k \equiv \pm 5 \pmod 27$ lie outside the set $C''$:  the former because $3^3 \mid k$ and the latter because $9 \mid k^2-16$ obstructs squarefreeness at the prime  $3 \equiv 3 \pmod 4$. So their elevated $3$-valuation is not explained by the law on $C''$.
\begin{table}[h]\centering
\renewcommand{\arraystretch}{1.25}
\begin{tabular}{@{}lll cc@{}}
\toprule
set & definition & density & \eqref{eq:law-fwd} exc. & \eqref{eq:law-conv} exc.\\
\midrule
$A$      & $3\nmid k,\ k(k^2-16)$ squarefree      & $0.167$ & $0$ & $0$\\
$A'$     & $3\nmid k,\ k^2-16$ squarefree         & $0.184$ & $0$ & $46$\\
$A''$    & $k^2-16$ squarefree                    & $0.323$ & $0$ & $913$\\
$B$ & odd part of $k^2 - 16$ squarefree & $0.645$ & $3{,}076$ & $4{,}753$\\
$B'$ & $B \setminus \{ k \equiv 4 \pmod 8\}$ & $0.565$ & $0$ & $1{,}538$\\
$B''$ & $B' \cap \{p^3 \mid k\Rightarrow p\equiv 2 \pmod 3 \}$ & $0.536$ & 0 & 0  \\
$C$ & $p^2 \nmid k^2 - 16$ for all $p \equiv 3 \pmod 4$ & $0.721$ & $3{,}423$ &  $5{,}330$ \\
$C'$ & $C \setminus \{ k \equiv 4 \pmod 8 \}$ & $0.631$ & $0$ &  $1{,}746$ \\
$C''$ & $C' \cap  \{p^3 \mid k(k^2 - 16) \Rightarrow p\equiv 2 \pmod 3 \}$ & $0.598$ & $0$ & $0$ \\
 & all values $k$ & 1.000 & $11{,}342$ & $17{,}588$ \\
\bottomrule
\end{tabular}
\caption{The parity laws \eqref{eq:law-general} on different enlargements of $A$, computed over the range $5 \leq k\leq 250{,}000$. Here ``density'' is the proportion of integers $k$ that lie in the set, and ``exc.'' is the number of exceptions of the corresponding law.}
\label{tab:sets}
\end{table}
\begin{figure}[h]
  \centering
  \newcommand{\SCALE}{20}%
  \begin{tikzpicture}[
      >=stealth,
      every node/.style={font=\normalsize},
      inc/.style={thin,->},
      dot/.style={circle,fill=black,inner sep=1.3pt}
  ]
  \foreach \d in {0.2,0.3,0.4,0.5,0.6,0.7}{
     \node[gray,font=\small,left] at (-1.0,\d*\SCALE) {\d};
  }
 \draw[->,gray] (-1.0,0.15*\SCALE) -- (-1.0,0.76*\SCALE) node[above,black,font=\small] {density};
  \node[dot,label=below:{$A$}]   (A)   at (0,    0.167*\SCALE) {};
  \node[dot,label=left:{$A'$}]   (Ap)  at (0,    0.184*\SCALE) {};
  \node[dot,label=left:{$A''$}]  (App) at (0,    0.323*\SCALE) {};
  \node[dot,label={[label distance=2pt]-30:{$B''$}}]  (Bpp) at (3.2,  0.536*\SCALE) {};
  \node[dot,label=left:{$B'$}]   (Bp)  at (3.2,  0.565*\SCALE) {};
  \node[dot,label=left:{$B$}]    (B)   at (3.2,  0.645*\SCALE) {};
  \node[dot,label=right:{$C''$}] (Cpp) at (6.4,  0.598*\SCALE) {};
  \node[dot,label=right:{$C'$}]  (Cp)  at (6.4,  0.631*\SCALE) {};
  \node[dot,label=right:{$C$}]   (C)   at (6.4,  0.721*\SCALE) {};
  \draw[inc] (A) -- (Ap);   \draw[inc] (Ap) -- (App);  \draw[inc] (A) -- (Bpp);
  \draw[inc] (App) -- (Bp); \draw[inc] (Bpp) -- (Bp);  \draw[inc] (Bp) -- (B);
  \draw[inc] (Cpp) -- (Cp); \draw[inc] (Cp) -- (C);    \draw[inc] (Bpp) -- (Cpp);
  \draw[inc] (Bp) -- (Cp);  \draw[inc] (B) -- (C);
  \end{tikzpicture}
  \caption{The sets of Table~\ref{tab:sets}, placed at height equal to their density.
  An arrow points from a set to a larger set containing it; since a subset is never
  denser than its superset, every arrow points upward.}
  \label{fig:placeholder}
\end{figure}

\subsection{$2$-adic observations}

Whenever $n_k$ is an integer it appears to be even, the only exceptions being
$k=1,2$ (where $n_k=1$). More is true: among integer values of $n_k$, the
inequality $v_2(n_k)\geq 2$ holds for all $k$ in the dataset outside the finite set
\[
k \in \{1,2,6,7,9,20,23,28,36,48,96,112,432\}.
\]
The distribution of $v_2(n_k)$ is recorded in
Table~\ref{tab:v2_statistics_cumulative} and
Figure~\ref{fig:histogram_v2_0_250K}.

\medskip
\noindent\emph{Dependence on congruence classes.} We next consider, for a fixed odd prime $q$, the proportion of $k$ in a residue
class $k\equiv a \pmod q$ with $v_2(n_k)\geq r$. For small $r$ this proportion
is close to $1$ in every class, so the structure only becomes visible for
larger $r$. Empirically, it is largest when $a\equiv\pm4\pmod q$ and second
largest when $a\equiv 0\pmod q$; see for example
Table~\ref{tab:v2_geq_mod7_statistics_congruence_cumulative}, where the
frequency of $v_2(n_k)\geq r$ is highest for $k\equiv\pm4\pmod 7$, closely
followed by $k\equiv0\pmod 7$. These are exactly the classes of bad reduction:
since $\Delta_k=2^{-24}k^{14}(k^2-16)$, we have
$\Delta_k\equiv0\pmod q$ if and only if $k\equiv 0,\pm4\pmod q$.

\medskip
\noindent\emph{Empirical model.} From the earlier observations, the primes dividing $k^2-16$ (the case
$k\equiv\pm4\pmod q$) have more influence than those dividing $k$ (the case
$k\equiv0\pmod q$), and empirically the appropriate weights are $2$ and $1$
respectively. We therefore assign to each odd prime $q$ the weight
\[
  W_q(k) := \begin{cases}
0 & q\nmid k(k^2-16),\\
1 & q\mid k,\\
2 & q\mid k^2-16,
\end{cases}\in\{0,1,2\},
\]
well defined since an odd prime divides at most one of $k,\,k-4,\,k+4$. Writing $\omega_{\mathrm{odd}}(n)$ for the number of
distinct odd prime divisors of $n$, we have
\[\sum_{q\ \mathrm{odd}} W_q(k)=\omega_{\mathrm{odd}}(k)+2\,\omega_{\mathrm{odd}}(k^2-16)=\omega_{\mathrm{odd}}(k)+2\,(\omega_{\mathrm{odd}}(k-4)+\omega_{\mathrm{odd}}(k+4)),\]
and find a moderately strong positive correlation (Pearson $r=0.69$) between this
quantity and $v_2(n_k)$. Since this sum involves only the odd primes, it appears to be natural to correct it by a term $c(k)$ that incorporates the local behavior of $E_k$ at the prime $2$. Let
\begin{equation}\label{eq:c-defn}
  c(k) =
  \begin{cases}
    4, & E_k \text{ semistable at } 2\ \ (v_2(N_k)\in\{0,1\}),\ \text{i.e. } k \text{ odd or } 16\mid k,\\
    2, & E_k \text{ additive at } 2\ \ (v_2(N_k)\in\{3,4\}),\ \text{otherwise.}
  \end{cases}
\end{equation}
We then set
\begin{equation}\label{eq:hatvdef}
  \hat v(k) := \sum_{q\ \mathrm{odd}} W_q(k) - c(k)
  = \omega_{\mathrm{odd}}(k) + 2\,\omega_{\mathrm{odd}}(k^2-16) - c(k).
\end{equation}
Then
$v_2(n_k)$ and $\hat v(k)$ have Pearson correlation $r=0.77$
(Figure~\ref{fig:correlationv2}), improving on the value $r=0.69$ obtained
without the correction $c(k)$. In fact, $\hat{v}(k)$ seems to underestimate $v_2(n_k)$ for
all but finitely many $k$.
\begin{figure}[ht]
  \centering
\includegraphics[width=0.85\textwidth]{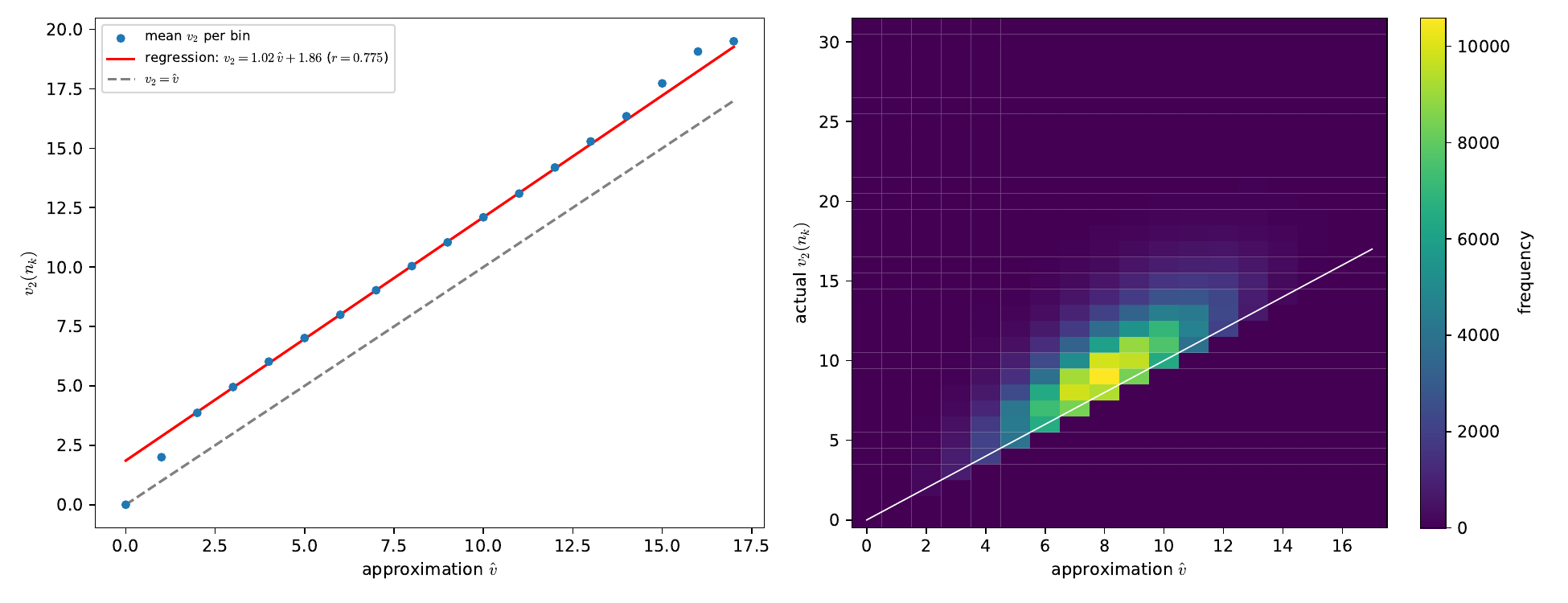}
  \caption{ $v_2(n_k)$ vs
    $\hat{v}(k)$ for $k$ with $v_2(n_k) \geq 0$}
  \label{fig:correlationv2}
\end{figure}
This leads us to the following conjecture.
\begin{conj}
\label{conj:low_bound}
For all $k \neq 4, 8$, we have
    \[\hat{v}(k) \leq v_2(n_k).\]
\end{conj}

Moreover, the excess $G(k) := v_2(n_k) - \hat{v}(k)$ has mean $2.05 $ and the data suggest that $G(k)=0$ may occur infinitely often, making the conjecture sharp in that sense.
This excess seems to spike whenever $v_2(k) = 3$, and then drops down below $2$ when $v_2(k) \geq 4$ (see Table \ref{tab:v2-left}). This spike comes from $k \equiv \pm 8 \pmod{64}$, where the mean excess is $3.02$ (see Table \ref{tab:v2-right}). In fact it grows further as $k \equiv \pm 8 \pmod{2^m}$ for increasing $m \geq 6$, plateauing at an excess of $3.5$ (see Table \ref{tab:v2-bottom}).
\begin{table}[h]

\centering
\begin{minipage}[t]{0.52\linewidth}
\centering
\begin{tabular}{l r r c}
\toprule
class & count & mean $v_2(n_k)$ & mean $G(k)$\\
\midrule
$k$ odd        & 125{,}000 & 9.56  & $+2.05$\\
$v_2(k)=1$     & 62{,}500  & 11.28 & $+2.04$\\
$v_2(k)=2$     & 31{,}249  & 10.39 & $+2.00$\\
$v_2(k)=3$     & 15{,}625  & 11.29 & $+2.41$\\
$v_2(k)=4$     & 7{,}813   & 8.67  & $+1.86$\\
$v_2(k)=5$     & 3{,}906   & 8.65  & $+1.91$\\
$v_2(k)=6$     & 1{,}953   & 8.55  & $+1.90$\\
$v_2(k)=7$     & 977    & 8.52  & $+1.93$\\
$v_2(k)\ge 8$  & 976    & 8.33  & $+1.95$\\
\midrule
all            & 249{,}999 & 10.15 & $+2.05$\\
\bottomrule
\end{tabular}
\subcaption{$v_2(n_k)$ by the $2$-adic class of $k$.}
\label{tab:v2-left}
\end{minipage}%
\hspace{0.0015\linewidth}%
\begin{minipage}[t]{0.42\linewidth}
\centering
\begin{tabular}{c r c c}
\toprule
$k\pmod 64$ & count & mean $v_2(n_k)$ & mean $G(k)$\\
\midrule
$8$  & $3{,}907$ & $11.96$ & $+3.09$\\
$24$ & $3{,}906$ & $10.69$ & $+1.81$\\
$40$ & $3{,}906$ & $10.70$ & $+1.82$\\
$56$ & $3{,}906$ & $11.83$ & $+2.94$\\
\midrule
all $v_2(k)=3$ & $15{,}625$ & $11.29$ & $+2.41$\\
\bottomrule
\end{tabular}
\subcaption{The class $v_2(k)=3$, refined by $k \pmod {64}$.}
\label{tab:v2-right}
\end{minipage}

\vspace{1em}

\begin{minipage}[t]{0.62\linewidth}
\centering
\begin{tabular}{c r r r r}
\toprule
$M$ & count & mean $v_2(n_k)$ & mean $G(k)$ & $2+s(k)$\\
\midrule
$5$        & $7{,}812$ & 10.69 & $+1.81$ & $2.00$\\
$6$        & $3{,}906$ & 11.53 & $+2.65$ & $2.75$\\
$7$        & $1{,}954$ & 12.19 & $+3.31$ & $3.13$\\
$8$        & $976$  & 12.30 & $+3.43$ & $3.31$\\
$9$        & $488$  & 12.39 & $+3.50$ & $3.41$\\
$\ge 10$   & $489$  & 12.37 & $+3.47$ & $\to 3.5$\\
\bottomrule
\end{tabular}
\subcaption{Excess by exact $2$-adic depth $M:=\max\{v_2(k-8),v_2(k+8)\}$,
compared with the model mean $2+s(k)$.}
\label{tab:v2-bottom}
\end{minipage}

\caption{$2$-adic valuation $v_2(n_k)$, with excess
$G(k)=v_2(n_k)-\hat v(k)$. Left: grouped by the $2$-adic class of $k$.
Right: the exceptional class $v_2(k)=3$, split further modulo $64$. Bottom: the exceptional classes by $2$-adic depth $M=\max\{v_2(k-8),v_2(k+8)$ \}. }
\label{tab:v2-valuation-classes}
\end{table}
A reasonable fit for the mean excess $G(k)$ seems to be
\begin{equation}\label{eq:s-defn}
\E[G(k)]\;\approx\;2+s(k),\qquad
s(k):=\tfrac{3}{2}\,\max\!\Big\{\,1-2^{\,5-\max\{v_2(k-8),\,v_2(k+8)\}},\;0\,\Big\}.
\end{equation}
Here $s(k)$ vanishes when $k$ is not $2$-adically close to $\pm 8$ and approaches $1.5$ as $k$ approaches $\pm 8$ $2$-adically.

Combining these observations, our empirical model is
\begin{equation}\label{eq:v2approx}
v_2(n_k) = \hat{v}(k) + s(k) + X_k,
\end{equation}
where the residual term $X_k$ appears to have a distribution that is largely independent of $k$. More precisely, its empirical distribution is well approximated by
\[
X_k\;\approx\;\mathrm{NB}\!\left(r=3,\;p=\tfrac{3}{5}\right),
\]
where $\mathrm{NB}$ denotes the \emph{negative binomial distribution}.
Accordingly, in this probabilistic model, we take
\[
\mathbb P(X_k=j)
 = \binom{j+2}{2}
   \left(\frac35\right)^3
   \left(\frac25\right)^j,
 \quad j\geq 0.
\]
This distribution has mean $2$ and mode $1$. A comparison between this statistical model and our data is shown in Figure~\ref{fig:histogramv2stat}.
\begin{figure}
    \centering
    \includegraphics[scale=0.5]{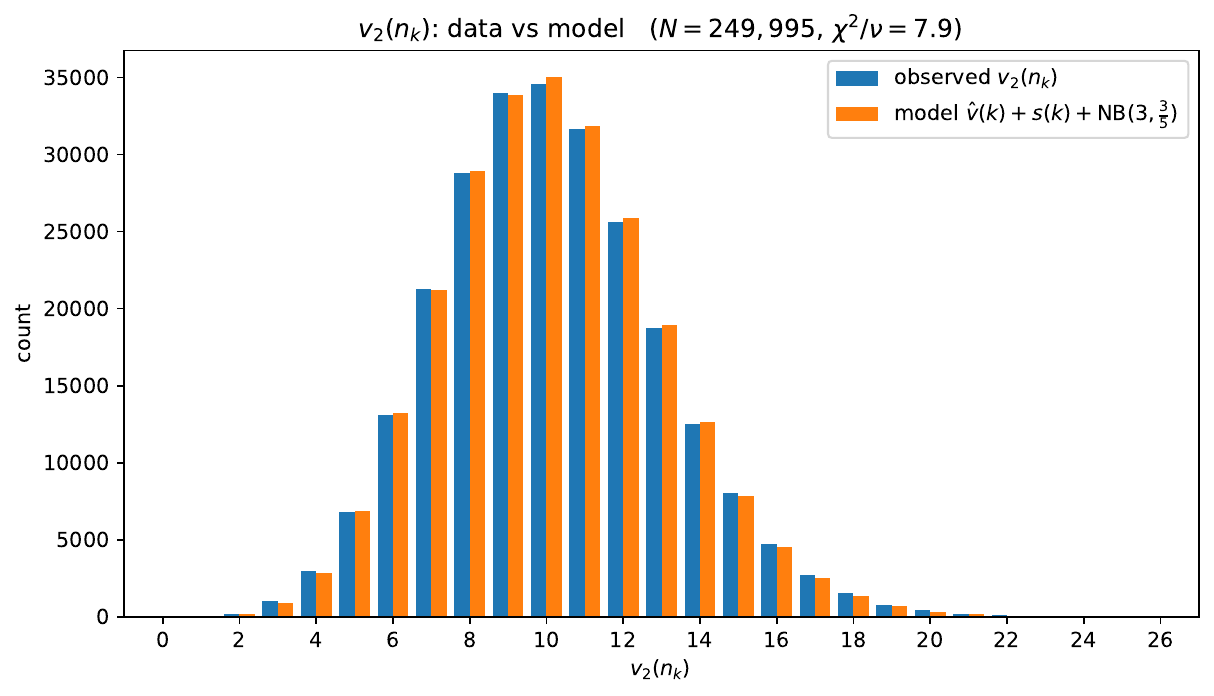}
    \caption{The observed distribution of $v_2(n_k)$ for values with $v_2(n_k)\geq 0$, compared with the prediction
    $\hat{v}(k)+s(k)+\mathrm{NB}(3,\tfrac{3}{5})$.}
    \label{fig:histogramv2stat}
\end{figure}

Since the mode of this distribution is $1$, the best deterministic predictor we found for $v_2(n_k)$ is
\begin{equation}\label{eq:P}
\hat{P}(k):=\mathrm{round}\bigl(\hat{v}(k)+s(k)+1\bigr).
\end{equation}
This predictor has Pearson correlation coefficient $0.78$ with $v_2(n_k)$, only slightly higher than the correlation obtained using $\hat{v}(k)$ alone. Moreover, the prediction $\hat{P}(k)$ is correct $24.3\%$ of the time.

\begin{rem}
Since $\hat v(k) + c(k) = \sum_{q\ \mathrm{odd}} W_q(k)$ is  expressed as a sum of local contributions over the odd primes, and the conditions
$q\mid k$ and $q\mid (k^2-16)$ hold on $1$ and $2$ residue classes modulo $q$
(with densities $1/q$ and $2/q$), each odd prime contributes
$1\cdot\tfrac1q + 2\cdot\tfrac2q = \tfrac5q$ to $\hat v$ on average. By Mertens'
theorem, and since $c(k)$ is bounded,
\[
  \frac1x\sum_{4\le k\le x}\hat v(k) = \sum_{q\le x}\frac5q + O(1)
  = 5\log\log (x) + O(1)
\]
as $x \to \infty$.  So, assuming Conjecture \ref{conj:low_bound}, the average order of $v_2(n_k)$ is at least $5 \log \log (x)$. Tables~\ref{tab:largeW-odd} and \ref{tab:largeW-even} list some of the  maximisers of $W := \omega_{\mathrm{odd}}(k)+2\,\omega_{\mathrm{odd}}(k^2-16)$ in the window
$2.5\times10^{5}<k<2\times10^{6}$. In every case $v_2(n_k) \geq  \hat{v}(k)$, with equality  at $k = 1540136$, $k=1812794$, and $k=1984814$. We also remark the large value $v_2(n_k)=27$ occurring at $k=1069814$ (though the largest value we observe is $v_2(n_k) = 31$, at the much smaller $k = 36326$).%

\end{rem}

\subsection{The regime $k=\sqrt m$}
\label{sec:sqrtm}

For $m < 0$ the parameter $k = \sqrt{m}$ is imaginary and \eqref{HG} does not apply directly; instead we use the functional equation \cite[Theorem~2.2]{LR07} (taking $k=(\sqrt{-m}-\sqrt{16-m})/4$ in their notation).
Using the notation $\mu(k):=\mu_k$, giving
\[\mu(\sqrt m)=\mu\!\left(\frac{64}{(\sqrt{-m}-\sqrt{16-m})^2}\right)-\mu(\sqrt{16-m}),\qquad m<0,\]
allowing us to compute $r_k$ also for $m<0$.

We have computed these $r_k$ for $|m| \leq 30{,}000$. We briefly compare the observations with integer values of $k$. First of all, for
\[
m \not \in \{ -768, -16, -9, -2, -1, 9, 18, 25, 27, 64, 144, 256 , 1024\},
\]
$r_k$ is the reciprocal of an integer.
For $p\ge5$, the law $\mathbb P(v_p(n_k)=r)\approx p^{-r}$ holds unchanged.
 For $p=3$, the parity law of Section \ref{ssec:partitylaw} breaks down completely (it holds
for only about half of the relevant $k$); of the accompanying congruences, only
$m\equiv25\bmod27$ survives, that is, the image of $k\equiv\pm5$. This forces once again $v_3(n_k)\ge1$.

For $p=2$, a variant of the lower bound in Conjecture \ref{conj:low_bound} holds. The
term $\omega_{\text{odd}}(k)$ appears to split into two different counts on $m$
\[
\omega^0_{\text{odd}}(m) + 2 \omega_{\text{odd}}^1(m),
\]
where $\omega^0_{\text{odd}}(m)$ is the number of odd primes $p$ dividing $m$ with
$v_p(m)$ even, and $\omega^1_{\text{odd}}(m)$ the number of odd primes with $v_p(m)$ odd.
Together with a sign-dependent shift that makes the bound sharp, the generalization of
$\hat{v}(k)$ becomes
\[
\hat{v}(m) = \omega^0_{\text{odd}}(m) + 2 \omega^1_{\text{odd}}(m)
             + 2 \omega_{\text{odd}}(m - 16) - c(m) + \varepsilon(m),
\]
where
\[
c(m) =
  \begin{cases}
    4, & v_2(m) = 0 \text{ or } v_2(m) \geq 8, \\
    2, & 1 \leq v_2(m) \leq 7,
  \end{cases}
\qquad
\varepsilon(m) =
  \begin{cases}
    0, & |m|\ \text{a perfect square},\\
    1, & m<0\ \text{nonsquare},\\
    2, & m>0\ \text{nonsquare}.
  \end{cases}
\]
Then $\hat{v}(m) \leq v_2(n_k)$ appears to hold for all $m$ with integral $n_k$, the sole
exception being $m=12$. Equality   is attained for about $17\%$ of these $m$; the
residual $v_2(n_k) - \hat{v}(m)$ is then non-negative, of the same negative-binomial
flavor as the excess $X_k$ in the integer case. We further remark
that all of the behavior described above also holds when $m$ is restricted to a single sign.

\subsection{A conditional proof of Conjecture \ref{conj:low_bound} for special families of $k$}
\label{ssec:lowboundproof}
We now use the work of Dummigan, Golyshev, de Jeu and Kerr
\cite{DGJK} to verify Conjecture~\ref{conj:low_bound} conditionally
on the Bloch--Kato conjecture and the additional hypotheses appearing
in their work.  Let
\[
\widetilde E_u\colon y^2=x(x+1)(x+u^2).
\]
The curve $\widetilde E_u$ is $2$-isogenous to $E_{4u}$; hence, when
$k=4u$, the two curves have the same $L$-function and conductor.
\begin{prop}
\label{deJeuprop}
        Let $k = 4 u$, where $u \equiv 4 \pmod 8$ and $u(u^2-1)$ is squarefree at the odd primes; assume in addition that $u+1$ has a prime divisor $\equiv 3 \pmod 4$.
            Assume moreover,
as in \cite[Thm.~3]{DGJK}, that $K_2^T(\tilde E_u)_{\mathrm{int}}\otimes_{\mathbb Z}\mathbb Q$
is one-dimensional, that $r_k$ is a rational number and that $H^1_f(\mathbb Q,\tilde E_u[2^\infty](-1))$ is finite;
and assume the Bloch--Kato conjecture for $\tilde E_u$.

Then Conjecture~\ref{conj:low_bound}, i.e. $\hat v(k)\le v_2(n_k)$, is true for these $k$.
    \end{prop}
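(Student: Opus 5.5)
The plan is to take the formula of Dummigan, Golyshev, de Jeu and Kerr \cite[Thm.~3]{DGJK} for $v_2(n_k)$ and bound each of its ingredients from below. Under the stated hypotheses (one-dimensional integral $K_2$, rationality of $r_k$, finiteness of $H^1_f$, and Bloch--Kato for $\widetilde E_u$) that theorem expresses $v_2(n_k)$ for $k=4u$ in the family $u\equiv 4\pmod 8$ with $u(u^2-1)$ odd-squarefree. Schematically it reads
\[
v_2(n_k)=\sum_{q \mid u(u^2-1),\ q \text{ odd}} (\text{local Tamagawa-type terms at } q) + v_2\#(\text{Selmer group}) - v_2(\iota_{k/4,2}) + (\text{constant from the prime } 2),
\]
with $\iota_{k/4,2}\in\{1,2\}$. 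The first step is to transcribe this formula exactly. The second is to express $\hat v(k)$ in terms of $u$. Since $k=4u$ and $u$ is even, we have $\omega_{\mathrm{odd}}(k)=\omega_{\mathrm{odd}}(u)$. Also $k^2-16=16(u-1)(u+1)$, so $\omega_{\mathrm{odd}}(k^2-16)=\omega_{\mathrm{odd}}(u-1)+\omega_{\mathrm{odd}}(u+1)$. Because $u\equiv 4\pmod 8$ we get $v_2(k)=4$, hence $16\mid k$ and $c(k)=4$. Therefore
\[
\hat v(k)=\omega_{\mathrm{odd}}(u)+2\,\omega_{\mathrm{odd}}(u-1)+2\,\omega_{\mathrm{odd}}(u+1)-4 .
\]

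Next we compare local contributions prime by prime. Squarefreeness at odd primes makes $\widetilde E_u$ multiplicatively reduced at every odd $q\mid u(u^2-1)$. For $q\mid u^2-1$ the fibre is $I_{2}$-type in $\widetilde E_u$ (or in its isogenous partner), and for $q\mid u$ it is $I_{4}$-type after the isogeny. I would read off from \cite{DGJK} that each such $q$ contributes at least $1$ when $q\mid u$ and at least $2$ when $q\mid u^2-1$. These contributions come from 2-power Tamagawa factors, or equivalently from local conditions that enlarge the 2-Selmer group of the isogeny. This would match the weights $W_q(k)$ exactly, and it is presumably the source of those weights.

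The remaining terms are the Selmer group, the index $\iota$, and the contribution at $2$; these must together be at least $-4$. The Selmer term is a nonnegative valuation. The regulator index costs at most $1$, since $v_2(\iota)\le 1$. The contribution at $2$ is a fixed computation, because $u\equiv 4\pmod 8$ fixes the reduction type of $\widetilde E_u$ at $2$. Here I would use the hypothesis that $u+1$ has a prime divisor $\equiv 3\pmod 4$. I expect its role is to force a nontrivial element, or a lower bound on the size, of the relevant Selmer group, or else to pin down $\iota=1$. Either way it compensates for the possible loss from $\iota$.

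The main obstacle is this last bookkeeping. One must identify exactly how \cite{DGJK} normalizes the Selmer group and the constant at $2$, and check that the hypothesis on $u+1$ yields precisely the needed extra unit. I would first verify the inequality numerically on small admissible $u$ (for example $u=4,12,20,\dots$ satisfying the hypotheses) against the dataset, to calibrate the constant, and only then write the general inequality.
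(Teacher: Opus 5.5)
\textbf{There is a genuine gap in the proposal.}

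\textbf{What is correct.} Your reduction $\hat v(k)=\omega_{\mathrm{odd}}(u)+2\,\omega(u^2-1)-4$ is right, using $v_2(k)=4$ and hence $c(k)=4$. Your second guess about the extra hypotheses is also right: together with the other conditions on $u$, they place you in the setting of \cite[Thm.~1]{DGJK} with $m_u=1$, hence $\iota_{u,2}=1$.

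\textbf{Where it fails.} You claim that the explicit local terms of the DGJK formula already reproduce the weights $W_q(k)$, leaving only a bounded constant at $2$ to check. They do not. Since $N_k=u(u^2-1)/4$ is odd and $n_k=-\omega_kN_kq_u$, you get $v_2(n_k)=v_2(q_u)$. With $\iota_{u,2}=1$, \cite[Equation (1.3)]{DGJK} reads
\[
v_2(n_k)+2=2\,\omega_1(u)+\omega_3(u)+\omega(u^2-1)+s_u,\qquad s_u=v_2\bigl(\#H^1_f(\Q,\widetilde E_u[2^\infty](-1))\bigr),
\]
where $\omega$ counts distinct prime factors and $\omega_j$ counts those $\equiv j\pmod 4$. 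So each prime dividing $u^2-1$ enters with weight $1$, not $2$. Primes $q\mid u$ with $q\equiv 1\pmod 4$ enter with weight $2$, and the constant is $-2$ rather than $-4$. Subtracting gives
\[
v_2(n_k)-\hat v(k)=s_u-\bigl(\omega(u^2-1)-\omega_1(u)-2\bigr),
\]
and the bracket grows with the number of prime factors of $u^2-1$. Knowing only $s_u\ge 0$ therefore proves nothing once $\omega(u^2-1)>\omega_1(u)+2$. No calibration of a constant on small $u$ can repair this. Your aside that the weights might come from local conditions enlarging a Selmer group points the right way. But then the Selmer term is exactly where the work lies, and it cannot be treated as merely nonnegative.

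\textbf{The missing ingredient.} You need a lower bound on $s_u$ that grows with $\omega(u^2-1)$. The paper gets it from
\[
s_u\ge v_2\bigl(\#H^1_f(\Q,\widetilde E_u[2^\infty](-1))[2]\bigr)
\]
and the explicit description of this $2$-torsion in \cite[Thm.~2]{DGJK}. Restricting to classes of the form $(D,1)$ (cf.\ \cite[Remark~7.9]{DGJK}) yields a subgroup indexed by squarefree divisors $D=\prod_{p\mid u^2-1}p^{x_p}$ of $u^2-1$ satisfying two conditions:
\begin{itemize}
\item $D$ is a square modulo every prime $q\mid u$ with $q\equiv 1\pmod 4$;
\item $D\equiv 1\pmod 8$.
\end{itemize}
These are $\F_2$-linear conditions on the $x_p$. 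There is one equation $\sum_p x_p\alpha_{p,q}=0$ for each such $q$, where $\left(\frac{p}{q}\right)=(-1)^{\alpha_{p,q}}$. There are at most two more equations encoding $\left(\frac{-1}{D}\right)=\left(\frac{2}{D}\right)=1$. So the solution space has dimension at least $\omega(u^2-1)-\omega_1(u)-2$, which is exactly the deficit.

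The bookkeeping is global, not prime-by-prime as you envisaged. The $\omega_1(u)$ quadratic-residue constraints are paid for by the extra weight on primes $q\equiv 1\pmod 4$ dividing $u$. The two constraints from $D\equiv 1\pmod 8$ are paid for by the gap between the constants $-2$ and $-4$.
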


\begin{proof}
By the hypotheses on $u$, the hypothesis of \cite[Theorem 1]{DGJK} are satisfied and $m_u = 1$, so that $\iota_{u,2} = 1$. In the notation of \cite{DGJK}, we have $F(u)=\mu_k$ and
\[
q_u=\frac{L(\widetilde E_u,2)}{(2\pi i)^2F(u)}.
\]
By the functional equation and the fact that $\widetilde E_u$ is
isogenous to $E_k$, one has
\[
n_k=-\omega_kN_kq_u.
\]
Under the present hypotheses, the conductor is
\[
N_k=\frac{u(u^2-1)}{4},
\]
which is odd.  Consequently,
\[
v_2(n_k)=v_2(q_u).
\]
    On the one hand, since $\iota_{u,2} = 1$, \cite[Equation (1.3)]{DGJK} reads
    \begin{equation}
    \label{v2_deJeu}
             v_2(n_k) + 2 = 2 \omega_1(u) + \omega_3(u) + \omega(u^2 -1) + s_u,
    \end{equation}
    where
    \[
    s_u = v_2 \left( \#H_f^{1}(\Q, \tilde{E}_{u}[2^\infty](-1))\right),
    \]
     $\omega$ is the number of distinct prime factors, and $\omega_j$ the number of distinct prime factors $\equiv j \pmod 4$. On the other hand, $k = 4u$ implies $\omega_{\text{odd}}(k) = \omega_{\text{odd}}(u) = \omega_1(u) + \omega_3(u)$ and similarly, $\omega_{\text{odd}}(k^2 - 16) = \omega(u^2-1)$. Since  $16 | k$, we have $c(k) = 4$.
Hence
\begin{equation}
\label{hat_in_u}
\hat{v}(k) = \omega_1(u) + \omega_3(u) + 2 \omega(u^2 - 1) - 4 .
\end{equation}
Combining the two Equations \eqref{v2_deJeu} and \eqref{hat_in_u} gives
\[
v_2(n_k) - \hat{v}(k) = s_u - (\omega(u^2 - 1)-\omega_1(u)- 2).
\]
Thus, in order to prove our conjecture for these $k$, it suffices to prove the inequality
\[
s_u \geq \omega(u^2 - 1)-\omega_1(u)- 2.
\]
By elementary group theory we have that
\[
s_u \geq s'_u :=v_2(\#H_f^{1}(\Q, \tilde{E}_{u}[2^\infty](-1))[2]).
\]
We now use the explicit description of this $2$-torsion subgroup given in \cite[Theorem 2]{DGJK}. Let $S$ be the set of prime divisors of $u^2-1$, and let $S'$ be the set of prime divisors of $u$ that are congruent to $1 \pmod 4$. Restricting the parametrization in \cite[Theorem~2]{DGJK} to pairs of the form $(D,1)$ as discussed in \cite[Remark~7.9]{DGJK}, we obtain a subspace consisting of the squarefree divisors $D$ of $u^2-1$ such that
\[
D \text{ is a square modulo every prime in } S'
\qquad\text{and}\qquad
D\equiv 1\pmod 8.
\]
Writing $D=\prod_{p\in S}p^{x_p}$ with $x_p\in \F_2$, the above two conditions give a linear system in the $\omega(u^2 - 1)$ variables $x_p$. Indeed, if $\left( \frac{p}{q} \right) = (-1)^{\alpha_{p, q}}$ for $\alpha_{p, q} \in \mathbb{F}_2$ for each prime $q$ dividing $u$ such that $q \equiv 1 \pmod 4$, we get a system of $\omega_1(u)$ equations $\sum_{p} x_p \alpha_{p,q} = 0$ in $\omega(u^2 - 1)$ variables for the first condition.  The second condition can be expressed as $\left(\frac{-1}{D}\right)=\left(\frac{2}{D}\right)=1$, and this gives at most two extra equations, $\sum_p x_p\beta_{p,-1}=\sum_p x_p\beta_{p,2}=0$, where $\left(\frac{-1}{p}\right)=(-1)^{\beta_{p,-1}}$ and  $\left(\frac{2}{p}\right)=(-1)^{\beta_{p,2}}$.
Thus, the system has $\omega(u^2 - 1)$ variables and at most $\omega_1(u) + 2$ equations. Its solution space therefore has dimension at least   $\omega(u^2-1)-\omega_1(u)-2$. Consequently,
\[
s_u \geq \omega(u^2-1)-\omega_1(u)-2,
\]
and we recover  that statement of Conjecture \ref{conj:low_bound} under these particular conditions.
\end{proof}
\begin{rem}[The Bloch--Kato formula at an arbitrary prime]
\label{rem:general-BK}
The formulation in \cite[\S12]{DGJK} is not restricted to the prime
$2$.
Whenever the hypotheses in Assumption~12.1 of \cite{DGJK} hold, the
Bloch--Kato conjecture predicts
\[
\begin{aligned}
v_p(n_{k})
={}&v_p(N_{k})
+\sum_{q\leq\infty}
v_p\!\left(
\operatorname{Tam}_{q,\omega}^{0}(H^1_{\mathrm{\acute et}}
\bigl(\widetilde E_{u,\overline{\Q}},\Z_p\bigr)(2))
\right)\\
&+
v_p\!\left(
\#H_f^1\bigl(\Q,\widetilde E_u[p^\infty](-1)\bigr)
\right)\\
&-
v_p\!\left(
\#H^0\bigl(\Q,\widetilde E_u[p^\infty](1)\bigr)
\right)
-
v_p\!\left(
\#H^0\bigl(\Q,\widetilde E_u[p^\infty](-1)\bigr)
\right)\\
&-v_p(\iota_{u,p}),
\end{aligned}
\]
where $k=4u$ and $\iota_{u,p}$ is the regulator index defined in
\cite[\S12]{DGJK}. Thus, conjecturally, the valuation of $n_k$
decomposes into conductor and local Tamagawa contributions, a
Tate-twisted Bloch--Kato Selmer contribution, global invariant
terms, and a regulator index.

At present, however, this does not give a practically computable
formula at a general odd prime: neither the full $p$-primary
Bloch--Kato Selmer group nor the regulator index
$\iota_{u,p}$ is known in general.  For example, when $u=4$,
the identity $n_{16}=1/11$ is interpreted in \cite{DGJK} as
predicting that
\[
11\mid\iota_{4,11}.
\]
\end{rem}

\section{The model} \label{sec:model}

We train a transformer-based model \cite{transformer} using \textsc{Axolver}, a system based on Int2Int \cite{Axolver}, on our dataset. The transformer-based model has 4 encoder layers and 4 decoder layers, 8 attention heads and an embedding dimension of 256 on batches of 32 examples, using the Adam optimizer \cite{adam} with a constant learning rate of $10^{-4}$. All other parameters were left at their default values unless otherwise indicated.%

Our input is an integer $k$, encoded as a sequence of digits. The target output is a rational number $r_k$, written as a pair of integers, each again encoded as a sequence of digits. Thus, the task is to learn
\[
k \mapsto r_k
\]
from our dataset.

We randomly split our set into \emph{training}, \emph{validation} and \emph{test sets},  of sizes $243{,}999$, $3{,}000$ and $3{,}000$, respectively. The training set is used to train the neural network, while the test set is used to monitor how well the model learns over the course of training. Since the hyperparameter tuning did not yield any improvements, we did not use the validation set in our analysis. We measure the accuracy on this set by the \emph{greedy accuracy}, which is the percentage of times the model guesses correctly. We evaluate the model performance at the end of every epoch which we set to be equal to 1,000 steps.

We attempt to learn $r_k$ in three types of tasks: learning $r_k$ directly, learning the size of $|r_k|$, and learning the $p$-adic valuations $v_p(n_k)$. The tasks involving the learning of $r_k$ and of the size of $|r_k|$ are performed on base $10$ (as a default), but the tasks involving the learning of $v_p(n_k)$ are performed on base $p$, since this allows the model to detect multiplicity by $p$ and the value of $v_p$ more easily than in other bases. For the base $10$ tasks, we use \texttt{max\_len} $=256$ and \texttt{max\_output\_len} $=64$. For the valuation tasks, which are performed on bases 2, 3, 5, and 7, we use \texttt{max\_len} = \texttt{max\_output\_len} $=512$ as the tokenized data sets are too large to be handled with smaller parameters.

We run the experiments with different features (computed with \textsc{PARI/GP} \cite{PARI}) consisting of various arithmetic invariants of the elliptic curve $E_k$. These are as follows.

\begin{itemize}
\item $\omega_k$, the root number, which is the sign of the functional equation
of $L(E_k,s)$. As explained in the introduction, this value is expected to codify the sign of $r_k$.

\item $N_k$ and $\text{factor}(N_k)$, the conductor of $E_k$ and its factorization. Our goal is to measure whether information on  prime reduction can help the model determine arithmetic properties of $r_k$. We are also interested in seeing if the model can learn the size $|r_k|$ from $N_k$, in view of the estimate \eqref{orderofnk}.

\item $\Delta_k$ and $\text{factor}(\Delta_k)$, the  discriminant of $E_k$ given by the model \eqref{eq:Weierstrass}, and its factorization. Its formula is $\Delta_k=2^{-24}k^{14}(k^2-16)$. Once again, we are interested in studying the connection of prime reduction and the arithmetic properties of $r_k$. Moreover, as equations \eqref{eq:hatvdef} and \eqref{eq:v2approx} indicate, the conjectural estimate for $v_2(n_k)$ seems to involve a count on the primes of bad reduction, with odd primes counted with different weight according to the type of reduction, which is better encoded by the exponents codified in the factorization of the discriminant.

\item $a_p$, the trace of Frobenius for $p=2, 3, 5,
7, 11, 13, 17, 19, 23, 29, 31, 37$, given as a vector. We wish to observe if giving the first traces of Frobenius can help the model learn $r_k$, given that they codify the first coefficients of the $L$-function.

\item The rank of $E_k$. This was computed with the \texttt{ellrank} command in \textsc{PARI/GP} \cite{PARI} and was therefore   represented as a vector $[\mathrm{rank}_{\min{}},\mathrm{rank}_{\max{}}]$%
While the rank is supposed to codify information about $L(E_k,s)$ at $s=1$ and not $s=2$ (via the Birch and Swinnerton-Dyer conjecture), this is still a natural quantity to consider due to its importance.

\item $c_2$, the Tamagawa number at $p=2$. We consider this quantity in the learning of $v_2(n_k)$, due to its role in codifying information on the reduction of $E_k$ at $2$, which is very different from the situation of the other primes.

\end{itemize}

\section{Machine learning results}\label{sec:machine}
In this section we analyze the output of our neural network, and compare it to the results of the previous section \ref{sec:statistics}.

\subsection{The size of $|n_k|$} The initial experiments intended for the model to learn $r_k$ given various features discussed in Section \ref{sec:model} (all written in base 10).  However, this approach proved to be too ambitious, as the experiments yielded validation accuracy essentially 0. For these experiments, we noticed that  the model only seems to learn the size of $r_k$ (or equivalently, the size of $n_k$). In the discussion that follows, we will refer to {\em learning the size of $|n_k|$} to the experiment where the model output $s_k$ is considered correct if $s_k$ has the same sign as $r_k$ and if $\frac{|r_k|}{2}< |s_k|< |2r_k|$, that is, the prediction of the {\em factor-two accuracy for $r_k$}.

The first observation is that including the root number $\omega_k$ as a feature doubles the accuracy, see Figure~\ref{fig:signvsnosign}, which suggests that the model does not learn anything about the sign of $r_k$ unless it is provided with said sign as a feature. Because of this, we conducted the experiments almost always including $\omega_k$ as a feature.

\begin{figure}
    \centering
    \includegraphics[scale = 0.5]{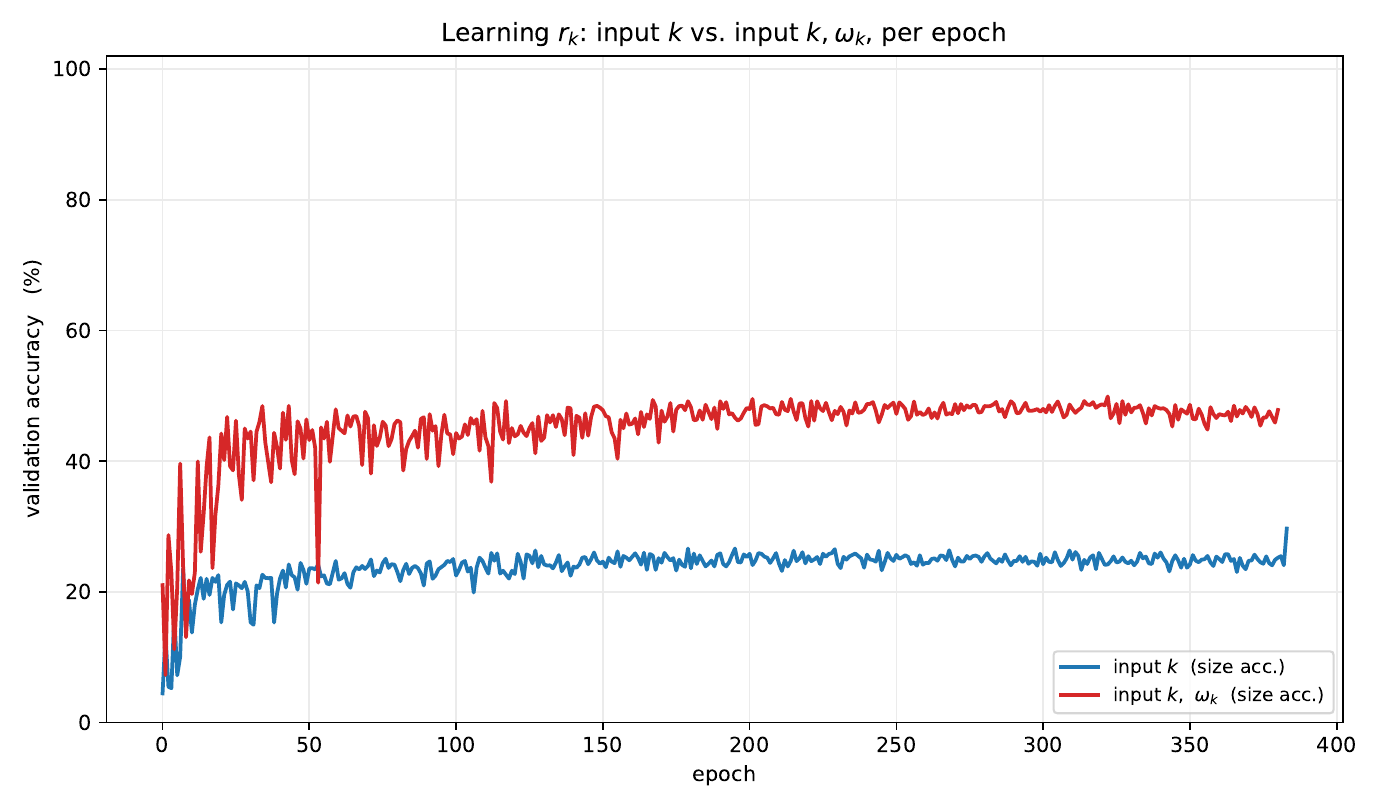}
    \caption{The model's accuracy on the size of $n_k$ with $\omega_k$ included versus not included. }
    \label{fig:signvsnosign}
\end{figure}

In the context described above, we found that the model learned best when given the conductor $N_k$ as a feature. In this case, as well as in the cases where the model was given the factors of the conductor $\text{factor}(N_k)$ and the factors of the discriminant $\text{factor}(\Delta_k)$, the model reached 100\% accuracy. The fastest regime for reaching this level was with $N_k$, closely followed by $\text{factor}(\Delta_k)$ and closely followed  by  $\text{factor}(N_k)$. These findings are recorded in Figure~\ref{fig:three_extras}. Given the estimate \eqref{orderofnk} we are not surprised by this result, as $\frac{N_k}{\log (k)}$ essentially codifies the size of $n_k$, and $\log(k)$ is a relatively small quantity.
Moreover, we speculate that the model recovers $N_k$ from $\text{factor}(\Delta_k)$ and $\text{factor}(N_k)$ and
is unable to recover $N_k$ from $\Delta_k$ (due to the fact that factorization is difficult). Nevertheless, we find it puzzling that $\text{factor}(\Delta_k)$ leads to faster learning than $\text{factor}(N_k)$.
The other considered features, namely the rank and the $a_p$'s do not seem to influence the model's learning.
\begin{figure}
    \centering
    \includegraphics[scale = 0.5]{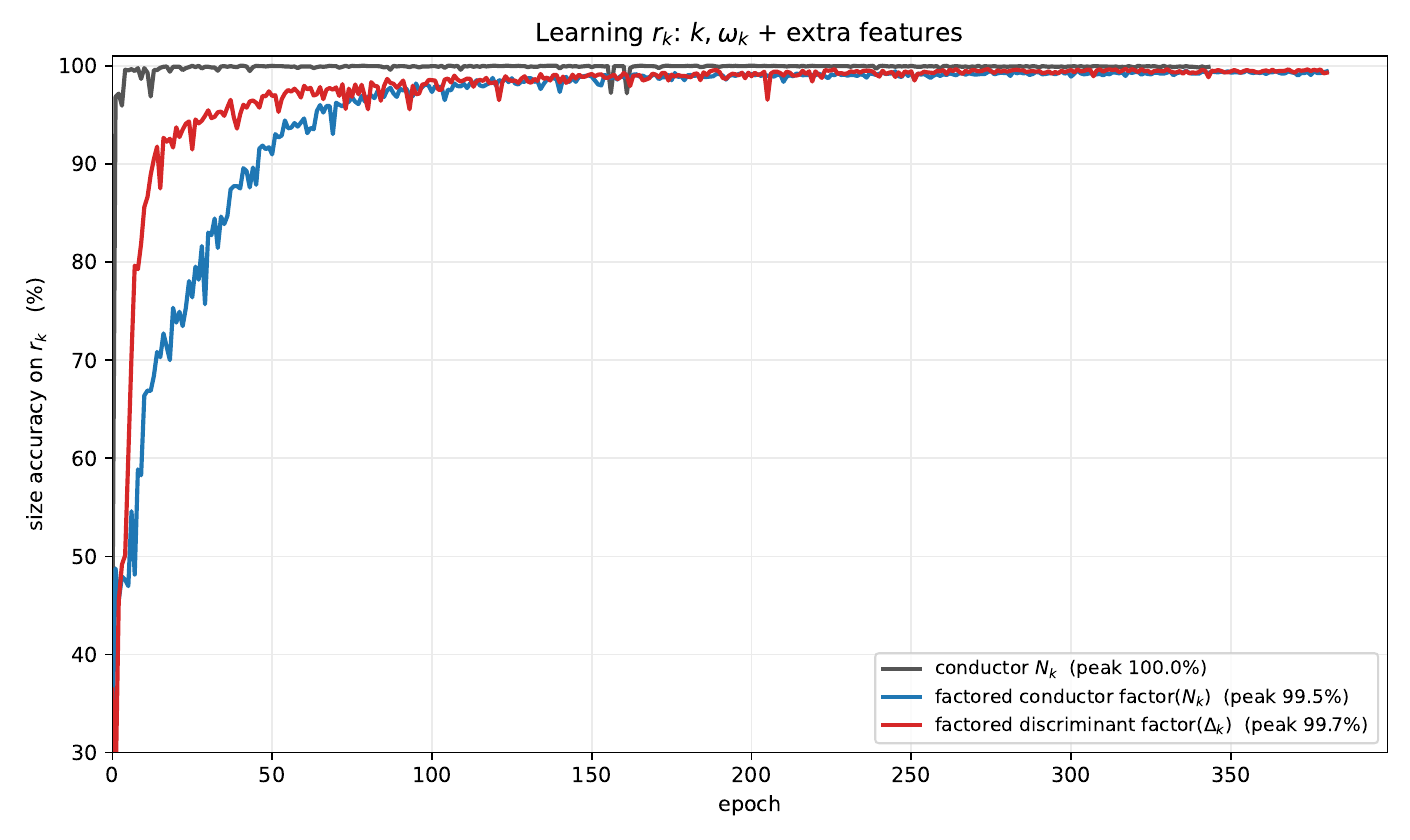}
    \caption{Learning $r_k$ from $k$ and $\omega_k$ in three variants: Giving $N_k$, $\text{factor}(N_k)$ or $\text{factor}(\Delta_k)$.}
    \label{fig:three_extras}
\end{figure}

Finally, in Figure~\ref{fig:SnapshotofL2} we see that the values of $L(E_k,2)$ induced by the model's predictions have a marginal
distribution resembling the empirical distribution of the true values from Figure~\ref{fig:distrL_E_2}. This does not imply accurate pointwise prediction of $L(E_k,2)$.
 Interestingly,  the experiment given by $k, \omega_k$ and $\text{factor}(\Delta_k)$ seems to reproduce this distribution the best.

\begin{figure}
    \centering
    \includegraphics[scale = 0.5]{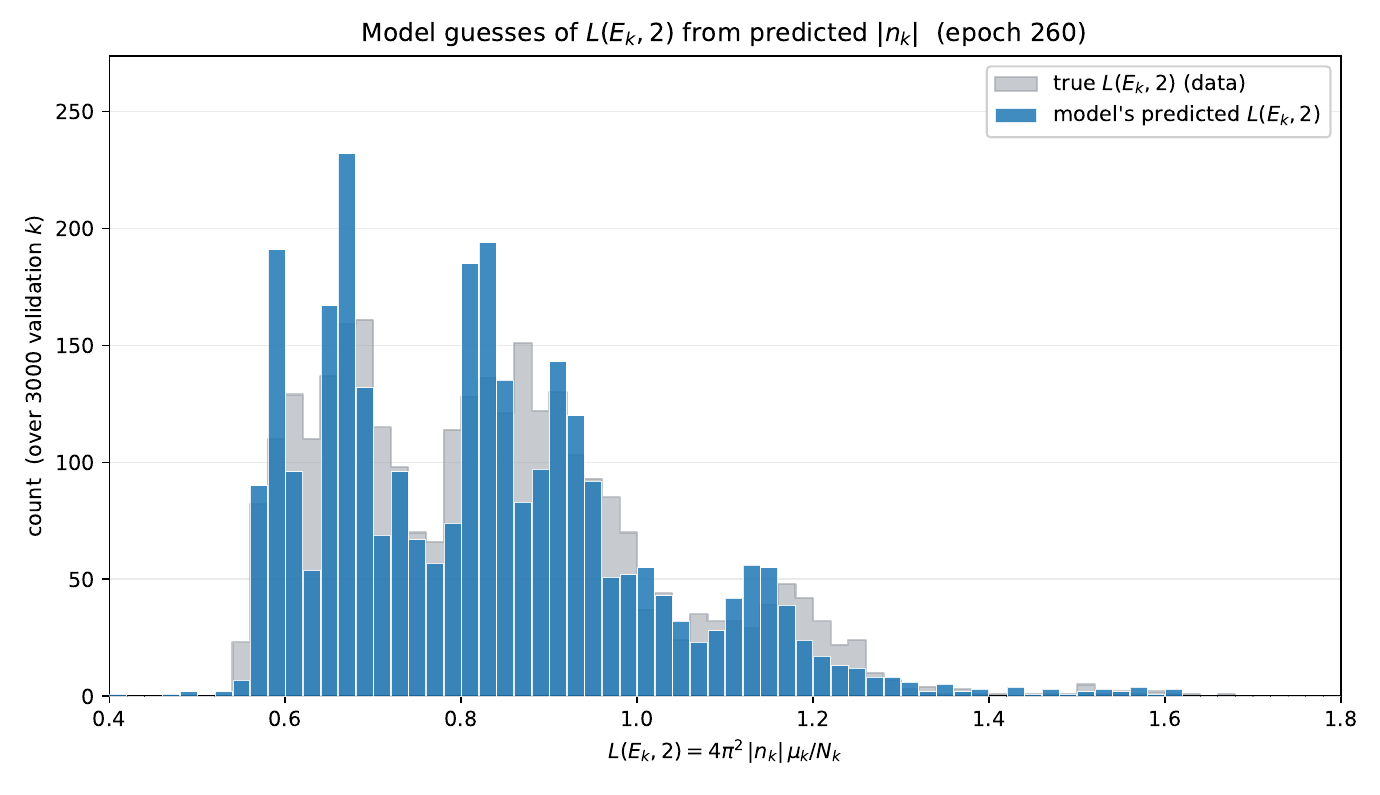}
    \caption{Distribution of the model guesses for $L(E_k,2)$ against the true values of $L(E_k,2)$, for $k$ in the validation set. This is for the experiment where  $k, \omega_k$ and $\text{factor}(\Delta_k)$ are given as input.}
    \label{fig:SnapshotofL2}
\end{figure}

\subsection{$p$-adic valuations of $n_k$ for $p \geq 5$.} These experiments were performed for $p=5,7$. They had $v_p(n_k)$ as output  with $k$ as a lone feature, and also with additional feature $\text{factor}(N_k)$, as well as additional feature $\text{factor}(\Delta_k)$ (only for $p=5$), all written in base $p$. In all cases
the model eventually predicts that none of the $n_k$ are multiple of $p$, thus leading to a greedy accuracy of
$\frac{p-2}{p-1}$. Therefore, we cannot extract any conclusions from these experiments.

\subsection{$3$-adic valuations of $n_k$}

For $p = 3$, we found that the model learned best when given $k$ and the factors of the discriminant $\mathrm{factor}(\Delta_k)$%
as features and $v_3(n_k)$ as output, all written in base $3$. We recorded the validation accuracy in Figure~\ref{fig:v3accuracy}. \begin{figure}
    \centering
    \includegraphics[scale = 0.5]{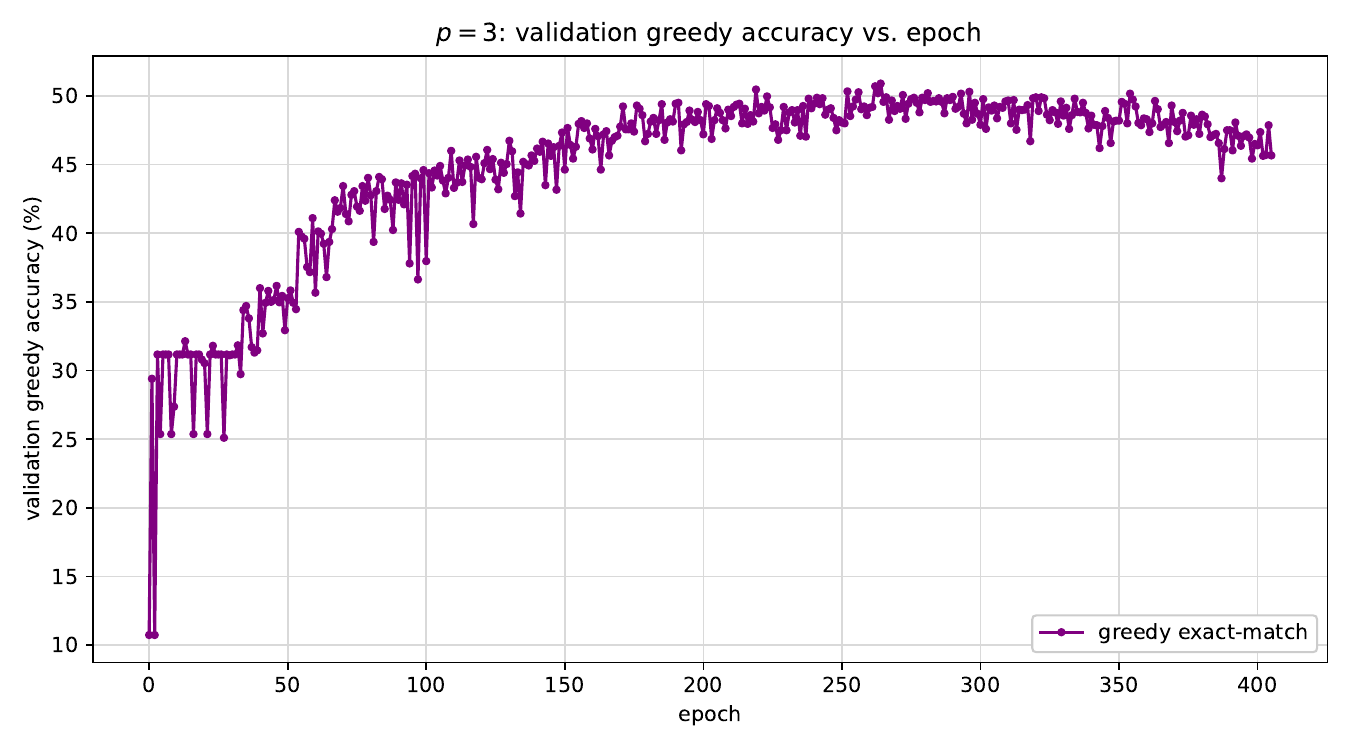}
    \caption{The validation accuracy (i.e. the percentage of correct guesses by the model) as a function of the epochs.}
    \label{fig:v3accuracy}
\end{figure} From this figure, we learn that the accuracy maximizes near $51 \%$ around epoch $264$. Moreover, we see that the model starts overfitting from roughly epoch $300$ on. In this pre-overfitting phase, the model seems to almost always guess that $v_3(n_k) \in \{0, 1, 2, 3\}$ (this happens $99.99\%$ of the time, on average). The accuracy for $n_k$ with high $3$-valuations, $v_3(n_k) \in \{ 2, 3\}$ is quite low, see Figure~\ref{fig:acc_v3_three}. In this figure, we learn that validation accuracy of the model conditioning $v_3(n_k) = 1$ is nearly $100 \%$ all the time. Though this is a bit misleading:  from the earliest epochs the model simply guesses $v_3(n_k)=1$
for nearly every input, and so trivially classifies the true $v_3(n_k) = 1$ cases correctly (this also explains the starting plateau in Figure~\ref{fig:v3accuracy}). The surprising fact is that this near $100\%$ accuracy on $v_3 = 1$ persists even as the model learns to recognize $v_3 = 0$ cases, and the corresponding curve climbs.

\begin{figure}
    \centering
    \includegraphics[scale = 0.5]{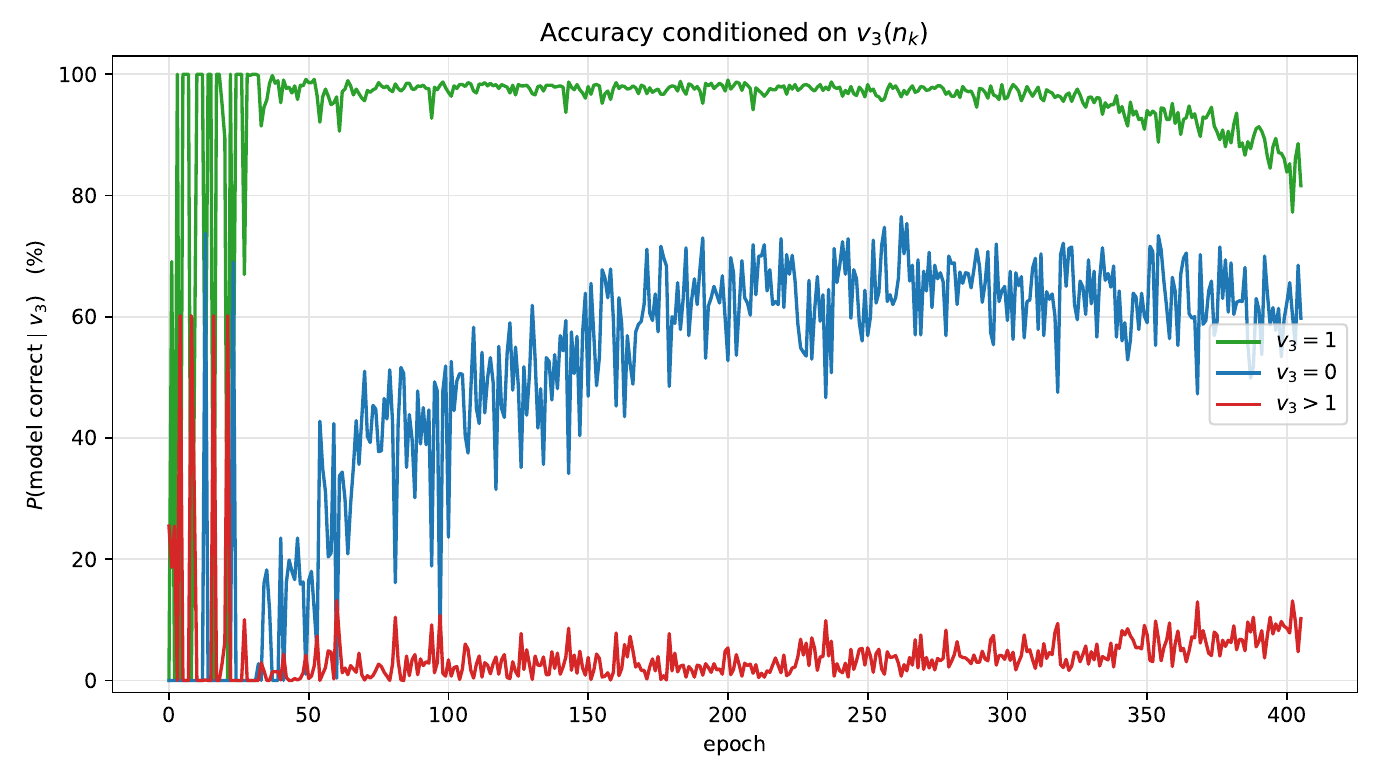}
    \caption{The validation accuracy, conditioned on $v_3(n_k)$}
    \label{fig:acc_v3_three}
\end{figure}

\subsubsection{Learning on different congruence classes}

As observed in the previous section, we have that $v_3(n_k) \geq 1$, whenever $k \equiv 0, \pm 5 \pmod 27$. We have that the model quite quickly learns this fact: $\mathbb{P}(\text{model predicts }v_3(n_k) = 0 \mid k \equiv 0, \pm 5 \pmod 27 )$ exceeds $1 \%$ only at $7$ epochs in the entire run. In fact, something more concerning happens: the model predicts almost always (i.e. $99.3 \%$ of the time in epoch range $100-300$) that $v_3(n_k) = 1$ for those $k$.  So, even though these congruence classes look more deterministic, their accuracy is in fact the lowest among all the congruence classes (their accuracy at epoch $264$ is just $41.5 \%$, compared to the global accuracy at this epoch being $50.9 \%$).

\subsubsection{Learning on the sets from Table \ref{tab:sets}}

Next we study how well the model learns the laws \eqref{eq:law-general} for $k \in A$. In Figure~\ref{fig:v3_three_A} we measure the validation accuracy assuming $v_3(n_k) = 0, 1$ or $>1$. \begin{figure}
    \centering
    \includegraphics[scale = 0.5]{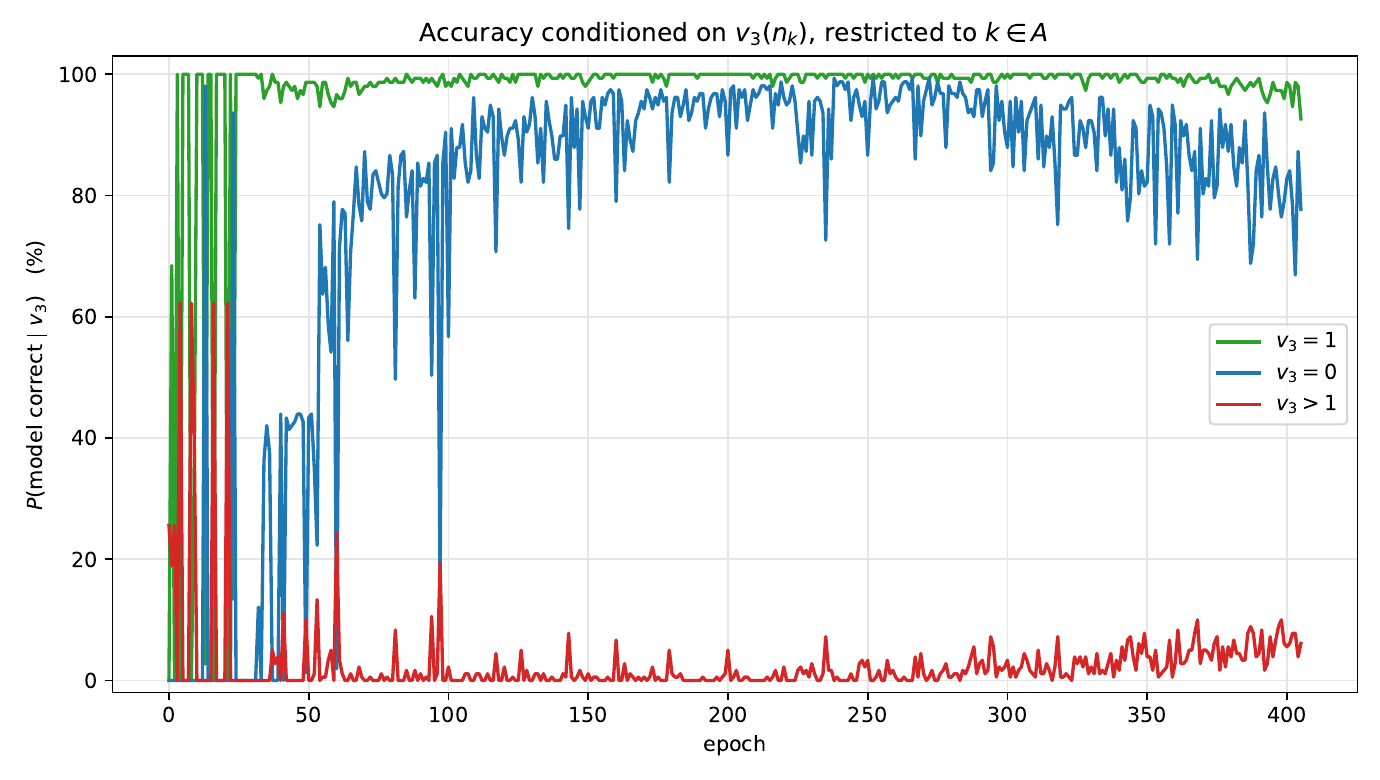}
    \caption{The validation accuracy on $k \in A$, conditioned on $v_3(n_k)$.}
    \label{fig:v3_three_A}
\end{figure} From this figure, we learn that the accuracy for $v_3(n_k) > 1$ is still very poor, but that the accuracy assuming $v_3(n_k) = 0$ is $98.7 \%$. Thus, on the set $A$, the model is highly successful  in distinguishing $v_3(n_k) = 0$ from $v_3(n_k) = 1$. Figure~\ref{fig:acc-by-set} shows the model's validation accuracy, restricted to $v_3(n_k)\in\{0,1\}$, on each of the sets in Table~\ref{tab:sets}, both at the peak epoch and as a function of training. \begin{figure}[t]
\centering
\begin{minipage}[c]{0.30\textwidth}
  \centering\small
  \renewcommand{\arraystretch}{1.15}
  \begin{tabular}{@{}l r r@{}}
  \toprule
  set & $\#$ & acc.\\
  \midrule
  $A$   & $306$  & $99.3\%$\\
  $A'$  & $340$  & $97.4\%$\\
  $A''$ & $566$  & $95.1\%$\\
  \midrule
  $B$   & $1{,}133$ & $89.8\%$\\
  $B'$  & $977$  & $95.2\%$\\
  $B''$ & $937$  & $95.1\%$\\
  \midrule
  $C$   & $1{,}251$ & $88.6\%$\\
  $C'$  & $1{,}081$ & $93.8\%$\\
  $C''$ & $1{,}035$ & $93.6\%$\\
  \bottomrule
  \end{tabular}
\end{minipage}\hfill
\begin{minipage}[c]{0.66\textwidth}
  \centering
  \includegraphics[width=\linewidth]{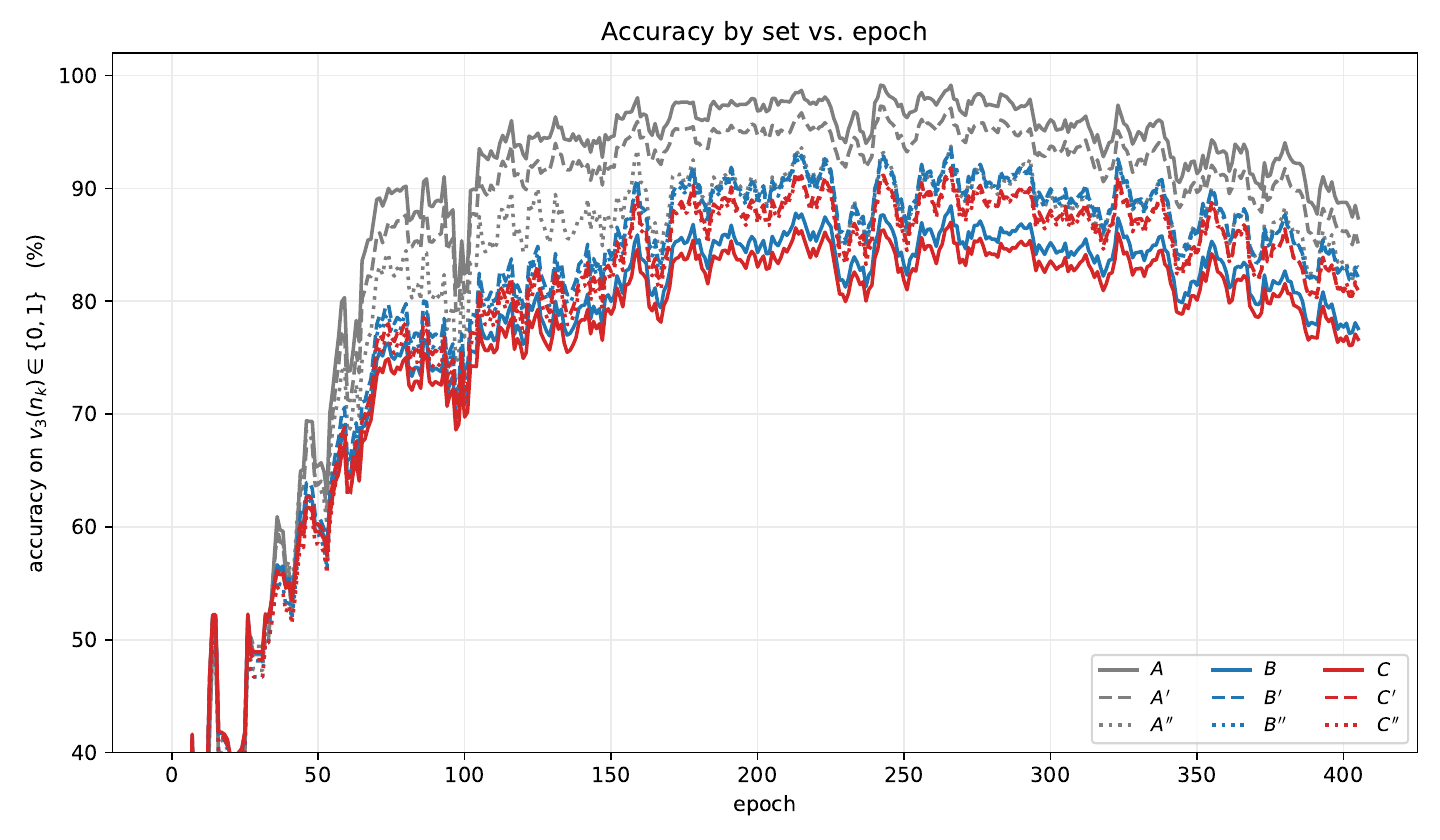}
\end{minipage}
\caption{Accuracy on $v_3(n_k)\in\{0,1\}$ over the sets of
Table~\ref{tab:sets}. Left: accuracy at the peak epoch~$264$, with $n$ the
number of validation $k$ in each set. Right: the same accuracies as a
function of the epochs.}
\label{fig:acc-by-set}
\end{figure} In this figure, we see that this conditional accuracy is the highest on the set $A$.  Although the parity laws remain
exact on some of the larger sets, these sets admit more varied factorizations
of $k(k^2-16)$. We speculate that this makes the relevant parity information
more difficult for the model to extract, leading to lower accuracy.
Removing the values $k \equiv 4 \pmod 8$, however --- passing from $B$ to $B'$ and from $C$ to $C'$ --- does increase this accuracy again.

Finally, we consider the complement of $C'' \cup \{ k \equiv 0, \pm 5 \pmod 27\}$, i.e. the values not covered by our largest set or the congruence classes above. As Figure~\ref{fig:outside} shows, the model accuracy is still increasing on the set. This leads us to suspect the model is still learning some arithmetic features, beyond the ones we already discussed.

\begin{figure}
    \centering
    \includegraphics[scale=0.5]{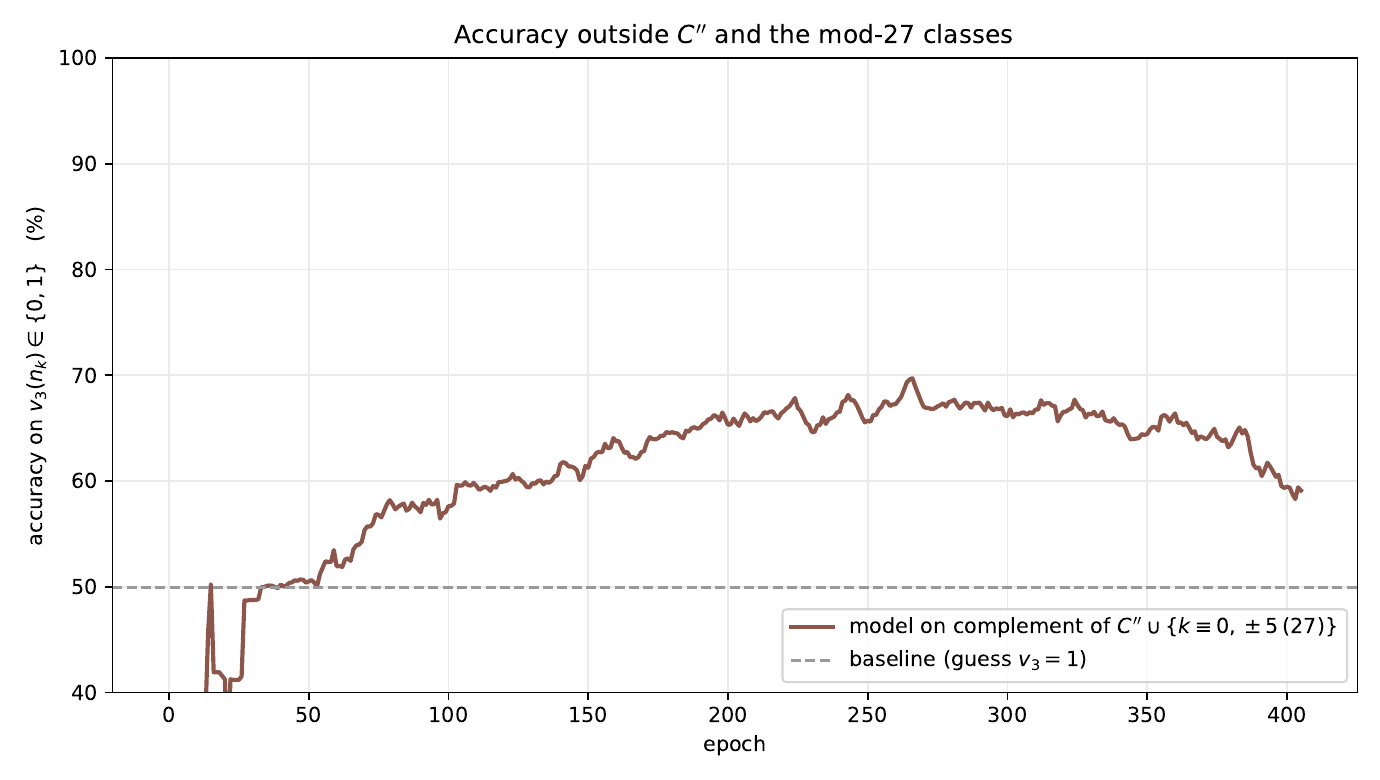}
    \caption{The accuracy assuming $v_3(n_k) \in \{0,1\}$ outside $C''$ and $k \equiv 0, \pm 5 \pmod 27$.}
    \label{fig:outside}
\end{figure}

 After the experiment giving $\text{factor}(\Delta_k)$, the model learns the most if given the factors of the conductor
$\text{factor}(N_k)$. The
model learns faster from
$\text{factor}(N_k)$ than from $\text{factor}(\Delta_k)$. Giving both
$\text{factor}(N_k)$ and $\text{factor}(\Delta_k)$ together as features
 does not lead to a significant difference in learning compared to
 only giving $\text{factor}(N_k)$, but the learning is slower.
The model does not exhibit learning if given $N_k$ or $\Delta_k$ as features. We speculate that this is due to the difficulty in factorization.

\subsection{$2$-adic valuations of $n_k$}
For $p = 2$, we found that the model learned best when given $k$ and the factorization of the discriminant $\mathrm{factor}(\Delta_k)$ as input and $v_2(n_k)$ as output, all written in base $2$. The validation accuracy is recorded in Figure~\ref{fig:v2_val_acc}. We see that the model's overfitting starts happening (i.e. the validation accuracy decreases) around epochs $220 - 240$. From the figure we learn that the model climbs almost immediately to an accuracy of about $23 \% - 25 \%$ in the first few epochs and has a \emph{first} plateau there from roughly epoch $5$ to $75$. It then breaks through and starts climbing through the epochs $80-150$. After that it settles at a \emph{second} plateau with accuracy of roughly $36 \% - 38 \%$, maximizing at epoch $210$ with an accuracy $39.6 \%$. We analyze the behavior of the model at these plateaus.
\begin{figure}
    \centering
\includegraphics[scale = 0.5]{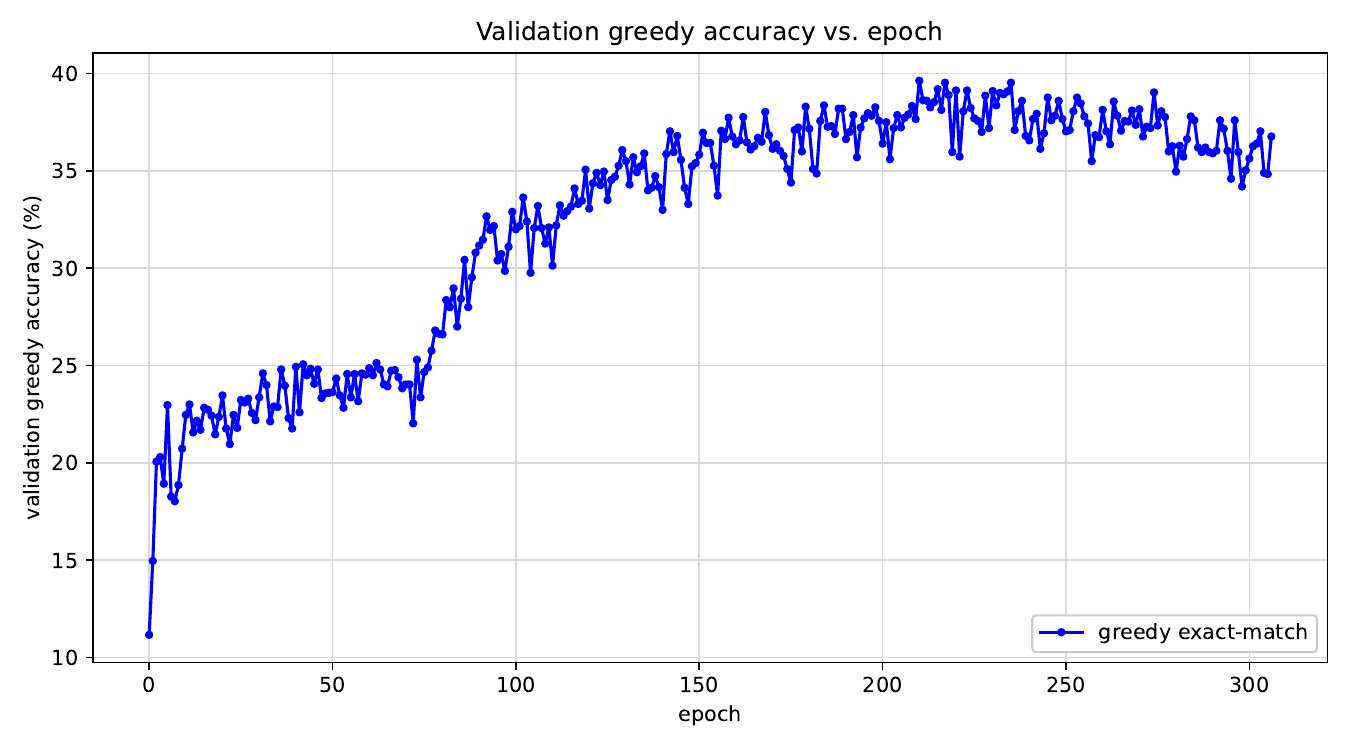}
    \caption{The validation accuracy (i.e. the percentage of correct guesses by the model) as a function of the epochs.}
    \label{fig:v2_val_acc}
\end{figure}

\subsubsection{Learning around the first plateau of Figure~\ref{fig:v2_val_acc}}
We first investigate whether the model learns our conjectural lower bound $\hat{v}(k) \leq v_2(n_k)$ from Conjecture~\ref{conj:low_bound}. As Figure~ \ref{fig:v2_lowbo} shows, from epoch 8 onward the model's predictions respect this bound for almost all $k$ in the validation set.
\begin{figure}
    \centering
    \includegraphics[scale=0.5]{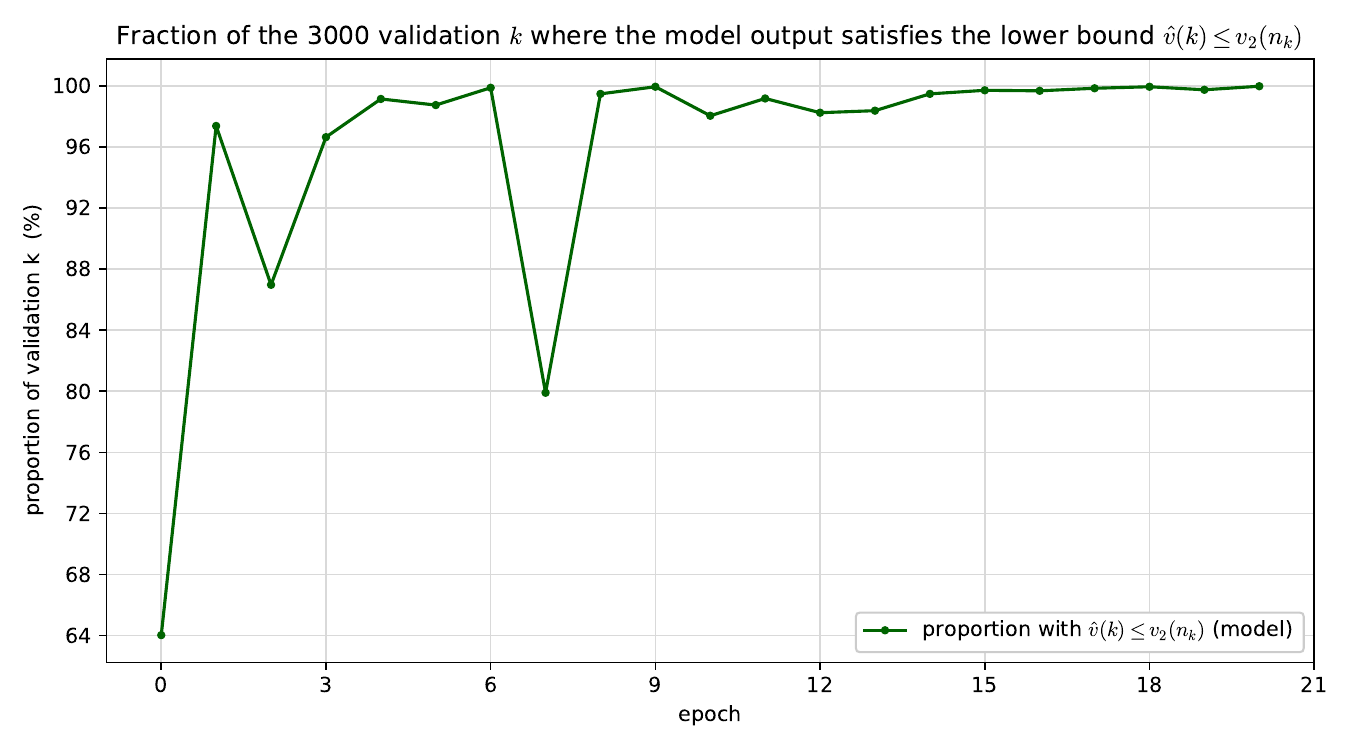}
    \caption{The proportion of $k$ in the validation set for which the model predicts $\hat{v}(k) \leq v_2(n_k)$.}
    \label{fig:v2_lowbo}
\end{figure}

In particular, we compare our predictor $\hat{P}(k) = \mathrm{round}(\hat{v}(k) + s(k) + 1)$ (given in \eqref{eq:P}) with the model's prediction by computing the Pearson correlation coefficient between the two as a function of the epochs, see Figure~\ref{fig:pearson_pred}. We see that this correlation rises quickly and reaches a maximum of $r = 0.97$ around epochs $40-70$. As it leaves the first plateau, the correlation gradually drops to around $r \approx 0.88$ in the last epoch. Since the accuracy rises after the first plateau, but the correlation is dropping, this means that the model learns something beyond our predictor $\hat{P}(k)$.
For contrast, the correlation between the true valuations $v_2(n_k)$ and our predictor over the full dataset of $
249,999$ values is only $r = 0.78$. This behavior is illustrated in Figure~ \ref{fig:predictor_vs_model}, where we plot the confusion matrix of the model's prediction against our deterministic predictor $\hat{P}(k)$. It can be seen that the mass of the confusion matrix lies close to the diagonal. As the number of epochs grows further, the distribution spreads out from the diagonal even before overfitting. Together with the simultaneous increase in accuracy, this suggests that the model learns additional structure beyond $\hat{P}(k)$.%
Figure~\ref{fig:truth_vs_model} shows the analogous confusion matrix of the true valuation $v_2(n_k)$ against the model's prediction. Since the correlation between $\hat{P}(k)$
and the model's prediction is very close to $1$ throughout the first plateau, we conclude that during this phase the model has essentially learned to approximate $\hat{P}(k)$.
\begin{figure}[p]
    \centering

    \begin{subfigure}{0.95\textwidth}
        \centering
        \includegraphics[width=\textwidth,page=1]{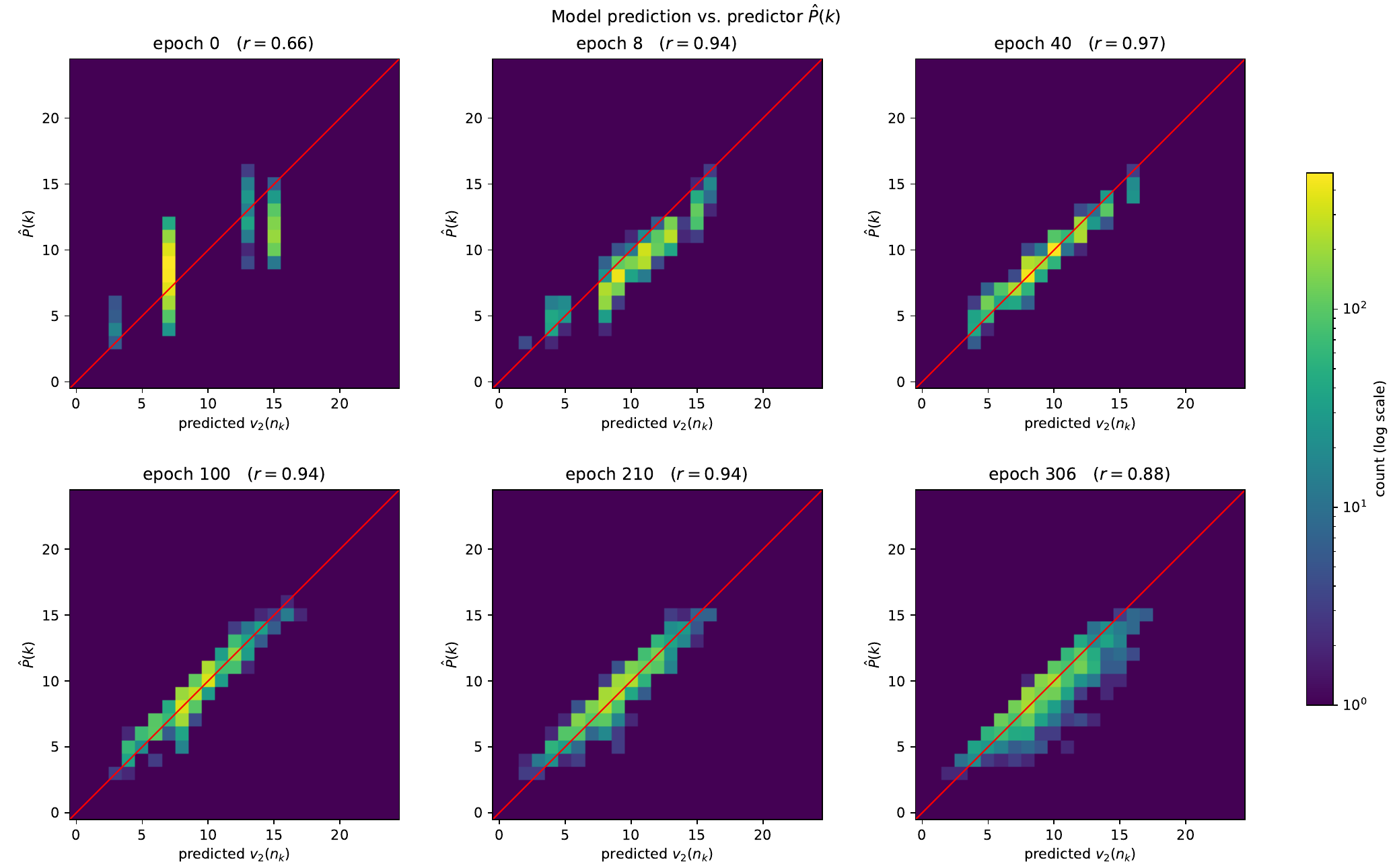}
        \caption{Model prediction vs. predictor $\hat P(k) = \mathrm{round}(\hat{v}(k) + s(k) + 1)$.}
        \label{fig:predictor_vs_model}
    \end{subfigure}

    \vspace{1em}

        \begin{subfigure}{0.95\textwidth}
        \centering
        \includegraphics[width=\textwidth,page=1]{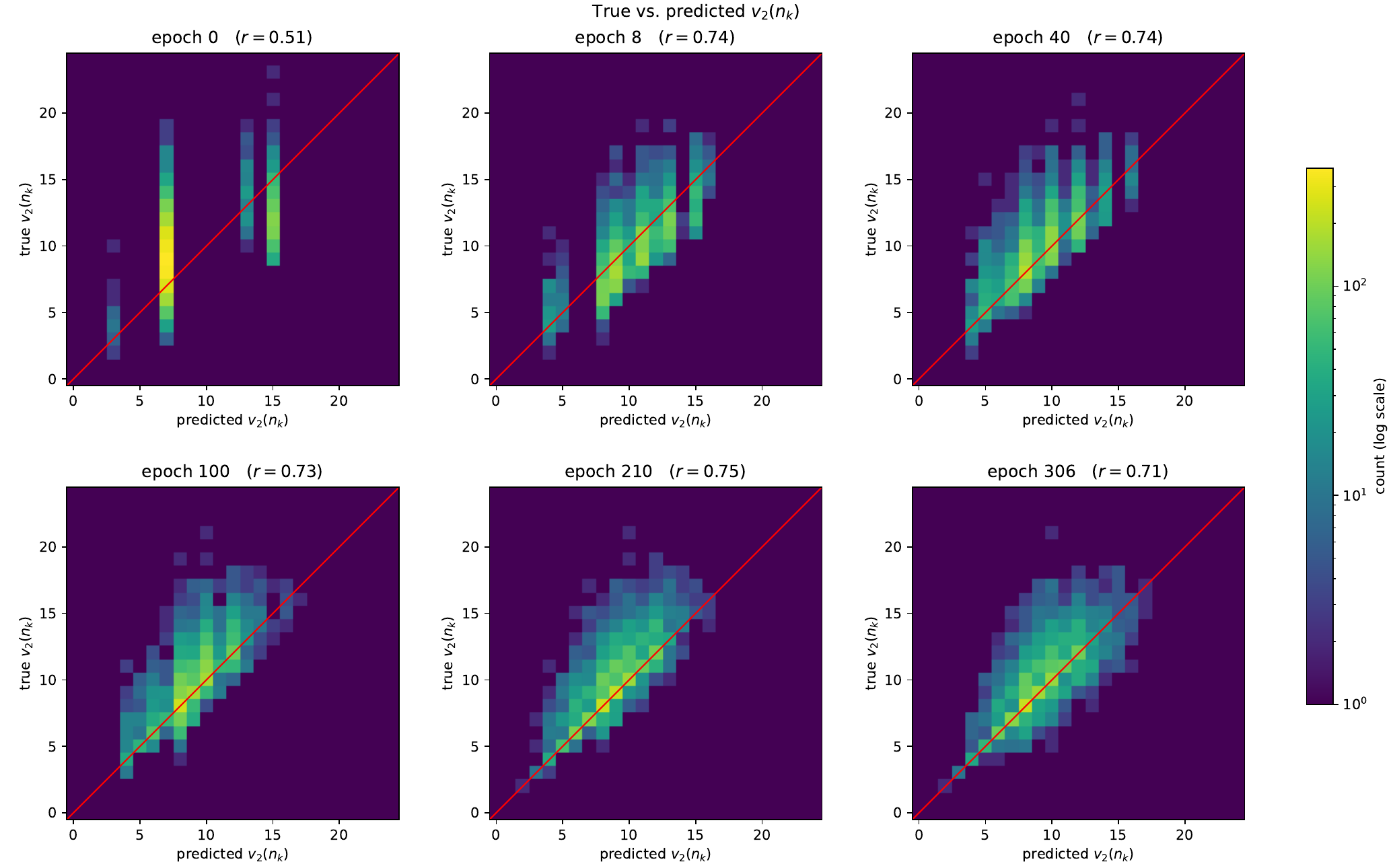}
        \caption{Model prediction vs. true $v_2(n_k)$.}
        \label{fig:truth_vs_model}

    \end{subfigure}

    \caption{Confusion-matrix snapshots across training epochs, including the Pearson correlation.}
    \label{fig:confusion_snapshots}
\end{figure}
\begin{figure}
    \centering
    \includegraphics[scale =0.5]{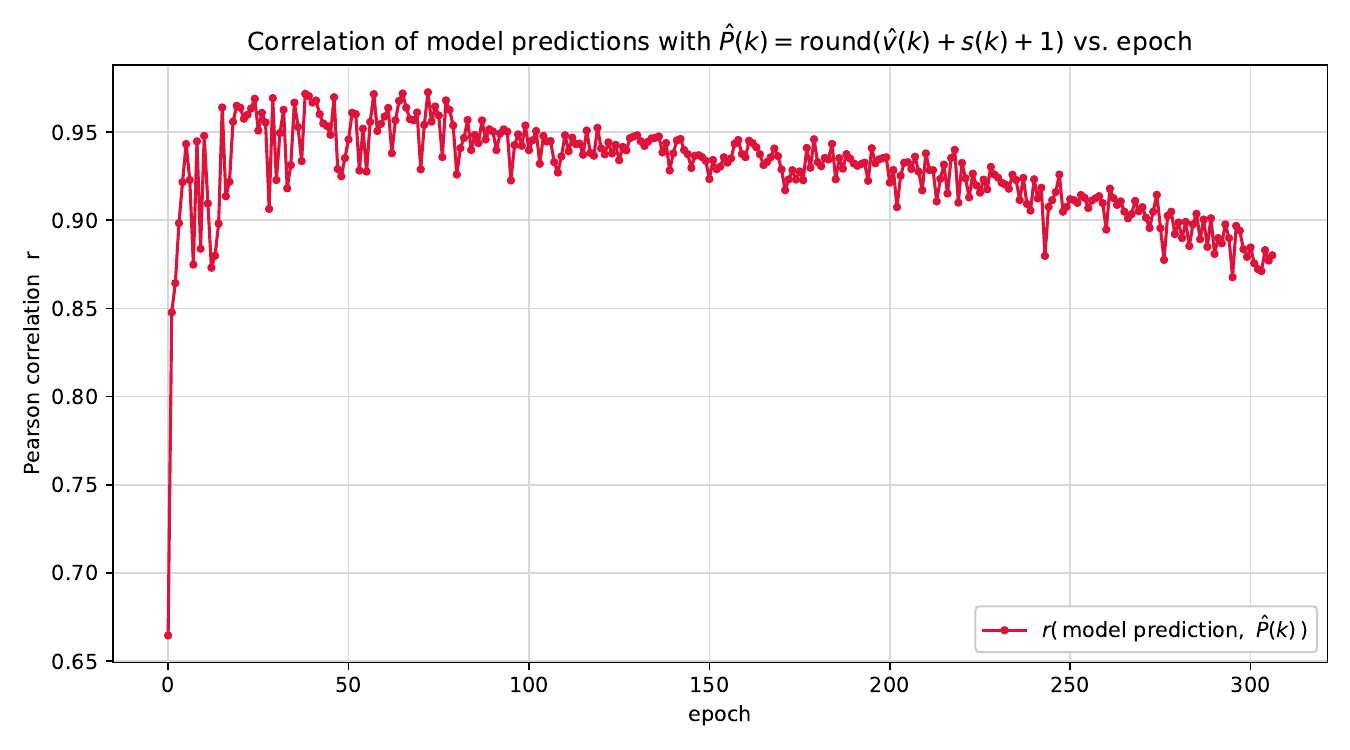}
    \caption{The Pearson correlation between the model predictions and our predictor $\hat{P}(k)$ for $k$ in the validation set as a function of the epochs.}
\label{fig:pearson_pred}
\end{figure}
As guessing $\hat{P}(k)$ is correct only $24.3 \%$ of the time, and the model's accuracy on the first plateau is likewise about $23\% - 25 \%$, it appears that at this stage the model has learned nothing beyond this predictor.

\subsubsection{Learning around the second plateau of Figure~\ref{fig:v2_val_acc}} \label{sec:secondplateau}

For the second plateau, it becomes much less clear \emph{what} the model actually learned. But there are several hints, for instance,  the main improvement from the first to the second plateau is to detect when the lower bound in Conjecture \ref{conj:low_bound} is sharp, i.e. when $v_2(n_k) = \hat{v}(k)$. To be more precise, the proportion \[q_g := \mathbb{P}(\text{model is correct}\mid G(k) = v_2(n_k) - \hat{v}(k)= g) \] for $g = 0$ moves from $23.4\%$ at epoch $40$ to $76.9 \%$ at epoch $210$, see Figure~\ref{fig:correct_given_G_vs_epoch}. \begin{figure}[h]
    \centering
    \includegraphics[scale = 0.5]{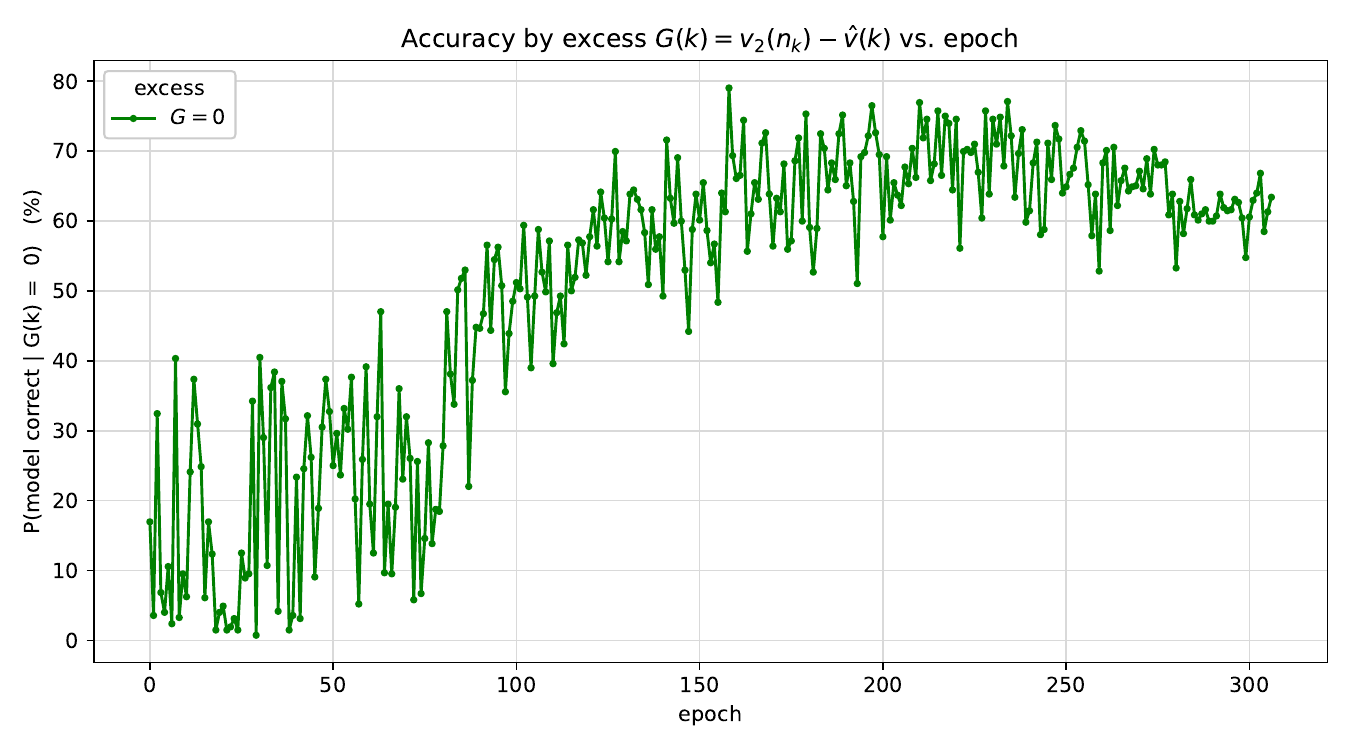}
    \caption{The proportion of times the model is correct, given that $v_2(n_k) = \hat{v}(k)$ as a function of the epochs.}
    \label{fig:correct_given_G_vs_epoch}
\end{figure}
Additionally for $g = 1$, $q_g$ moves from $61.7 \%$ at epoch $40$ to $74 \%$ at epoch $210$. Here the high proportion at the first plateau can be explained from the fact the model learns to approximate $\hat{P}(k)$, which coincides with $\hat{v}(k) + 1$ (i.e. $g = 1$ here) quite often. For higher values of $g$, these improvements are much smaller, see Table \ref{tab:correct_by_G}.
\begin{table}[ht]
  \centering
  \begin{subtable}[t]{0.48\linewidth}
    \centering
    \begin{tabular}{ccc}
      \toprule
      $g$ & $\mathbb{P}(G(k)=g)$ & $\mathbb{P}(\text{correct}\mid G(k)=g)$ \\
      \midrule
      $0$ & $22.4\%$ & $23.4\%$ \\
      $1$ & $24.5\%$ & $61.7\%$ \\
      $2$ & $21.3\%$ & $21.5\%$ \\
      $3$ & $13.5\%$ & $0.2\%$  \\
      $4$ & $8.4\%$  & $0.0\%$  \\
      \bottomrule
    \end{tabular}
    \caption{Epoch $40$ (first plateau).}
    \label{tab:correct_epoch40}
  \end{subtable}
  \hfill
  \begin{subtable}[t]{0.48\linewidth}
    \centering
    \begin{tabular}{ccc}
      \toprule
      $g$ & $\mathbb{P}(G(k)=g)$ & $\mathbb{P}(\text{correct}\mid G(k)=g)$ \\
      \midrule
      $0$ & $22.4\%$ & $76.9\%$ \\
      $1$ & $24.5\%$ & $74.0\%$ \\
      $2$ & $21.3\%$ & $17.9\%$ \\
      $3$ & $13.5\%$ & $3.5\%$  \\
      $4$ & $8.4\%$  & $0.4\%$  \\
      \bottomrule
    \end{tabular}
    \caption{Epoch $210$ (second plateau).}
    \label{tab:correct_epoch210}
  \end{subtable}
  \caption{The proportion of times the model guesses correctly, under the assumption
    that the value $G(k) = v_2(n_k) - \hat{v}(k)$ is fixed, together with the
    distribution of $G(k)$. Comparing the first plateau (A) with the second plateau (B). This is for all $k$ in the validation set.}
  \label{tab:correct_by_G}
\end{table}

 The optimal learning with   $\text{factor}(\Delta_k)$ as a feature should be interpreted in light of equations  \eqref{eq:hatvdef} and \eqref{eq:v2approx}, as the factors of the discriminant encode not only the primes of bad reduction, but they also make a distinction between those dividing $k$ and those dividing $k^2-16$.
Among the other experiments that were considered, providing the  factors of the conductor
$\text{factor}(N_k)$ as a feature leads to substantial learning, but less than giving $\text{factor}(\Delta_k)$. We interpret this phenomenon as providing the primes of bad reduction, but not making the distinction among those that divide $k$ from those that divide $k^2-16$.  Giving both
$\text{factor}(N_k)$ and $\text{factor}(\Delta_k)$ together as features leads to slightly worse learning than giving
$\text{factor}(\Delta_k)$ alone, but significantly better than giving $\text{factor}(N_k)$ alone. We speculate that the model has some trouble when provided with too much information. We found no substantial learning when introducing $c_2$, $N_k$, or $\Delta_k$ as features. For the last two cases, this is consistent with the difficulty in factorization.

\section{Conclusions and future directions}
\label{sec:conclusions}

In this work, we have investigated the rational factors $r_k$ appearing
conjecturally in Boyd's identities \eqref{eq:conjecture}
by combining a large-scale numerical study with machine-learning experiments.
Rather than revealing a single formula for $r_k$, our explorations reveal
several layers of arithmetic structure. The sign of $r_k$ is governed by the
root number, while the conductor determines the order of magnitude of
$n_k=1/r_k$.
At the same time, the divisibility properties of $n_k$ are strongly influenced
by the local arithmetic of $E_k$.

One of the most striking conclusions of the statistical analysis is the sharp
difference between the behavior at different primes. For primes $p\geq 5$, the
positive valuations appear to follow the remarkably simple law $\mathbb{P}\bigl(v_p(n_k)=r\bigr)=p^{-r}$ for $r\geq 1,$
although the deviation observed at $p=5$ suggests that this model may require
some refinement. The primes $2$ and $3$ exhibit considerably richer
arithmetic. At $3$, congruence conditions on $k$ interact with the parity of
the number of bad primes congruent to $1$ modulo $3$, producing laws that are
essentially deterministic on large subfamilies. At $2$, the valuation is well
approximated by a weighted count of the odd primes dividing $k$ and
$k^2-16$, together with corrections arising from the reduction at $2$ and
from the $2$-adic proximity of $k$ to $\pm8$. After these deterministic contributions are removed, the remaining term is
closely modeled by a fixed negative-binomial distribution. Although this last
observation remains heuristic, the lower bound of Conjecture~\ref{conj:low_bound}
provides a concrete arithmetic statement suggested by the data.

The machine-learning experiments clarify both the possibilities and the
limitations of the present approach. Predicting $r_k$ exactly proved to be too
difficult, but the model learned its size very effectively when the conductor, or factorizations from which it can be recovered, were included among the inputs. The experiments
therefore confirm that the choice and representation of arithmetic features
are crucial: the factorizations of the conductor and discriminant are much
more informative for the valuation problems than the corresponding unfactored
integers. By contrast, the experiments for $p=5$ and $p=7$ collapsed to the
most frequent output and did not reveal additional structure, illustrating
the difficulty caused by highly unbalanced valuation distributions.

The results at $2$ and $3$ are more encouraging. For the $3$-adic valuation,
the model recognizes several of the congruence and parity phenomena found in
the statistical analysis and continues to improve even on values of $k$ not
covered by the largest families considered in Section~3. More surprisingly,
in the $2$-adic experiment the evolution of the model appears to have a
mathematical interpretation. During the first plateau, its predictions are
very close to the deterministic predictor $\widehat P(k)$ derived from our
statistical model. During the second plateau, however, its accuracy increases
while its correlation with $\widehat P(k)$ decreases. In particular, the
model becomes substantially better at recognizing when the lower bound
$\widehat v(k)\leq v_2(n_k)$ is sharp. This suggests that the network detects
additional arithmetic information that is not yet incorporated into our
description of $v_2(n_k)$.

These observations lead to several natural questions.
 The most important
is to find a conceptual explanation for the arithmetic of $n_k$. This includes
explaining its near-integrality, proving or refining the proposed distribution
for $v_p(n_k)$ when $p\geq5$, and understanding the congruence and parity laws
at $3$. At the prime $2$, one would like to prove
Conjecture~\ref{conj:low_bound}, characterize the values of $k$ for which
equality holds, and explain the apparent negative-binomial distribution of
the excess. The conditional formula discussed in
Remark~\ref{rem:general-BK} suggests a possible framework for such
an explanation.  It would be interesting to develop arithmetic
statistics for the families of Tate-twisted Bloch--Kato Selmer
groups and regulator indices occurring in that formula.  Existing
probabilistic models for ordinary Selmer groups, such as those
surveyed by Poonen \cite{Poonen}, may provide useful guidance, although the groups
appearing here are different and the regulator-index term must also
be modeled.

For $p\geq5$, the proposed law
\[
\mathbb P\bigl(v_p(n_k)=r\bigr)=p^{-r}
\]
says that, after the first factor of $p$ occurs, the higher
valuation follows the usual geometric law.  The main question
is therefore the increased probability
\[
\mathbb P(v_p(n_k) \geq 1)=\frac{1}{p-1}.
\]
It would be interesting to determine whether this
enhancement arises from the local Tamagawa factors, the
Bloch--Kato Selmer group, the regulator index, or an interaction
among these terms.  Explicit approaches to $p$-adic regulators,
such as those developed by Coleman--de Shalit~\cite{Coleman-de-Shalit}, Besser ~\cite{Besser}, and
Asakura--Chida ~\cite{AsakuraChida}, may be useful in investigating the regulator
contribution. Ultimately, these phenomena should be related to the regulator
interpretations underlying Boyd's conjectures and to the local arithmetic of
the elliptic curves $E_k$.

The machine-learning results also suggest  directions for future
experiments.
Instead of predicting $v_2(n_k)$ directly, one could train a model
to predict the excess $G(k)=v_2(n_k)-\widehat v(k),$
or simply to classify the cases in which $G(k)=0$. We attempted to use the model to identify the cases in which $G(k)=0$, but did not obtain a useful classification. Interpreting such a
classifier, testing additional arithmetic features, and searching for
subfamilies on which it performs unusually well may lead to a refinement of
the present conjectures. For the primes $p\geq5$, the model tends to collapse to the trivial prediction $v_p(n_k) = 0$, since this is the most common class and the loss function only distinguishes correct from incorrect predictions. A class-balanced loss, which assigns greater weight to the rarer positive valuations, or a probabilistic model predicting the full distribution of $v_p(n_k)$, may help prevent this collapse.%
A first natural extension, taking $k=\sqrt m$ for $m\in\mathbb Z$ of either sign, is
carried out in Section~\ref{sec:sqrtm}: $r_{\sqrt m}$ is again rational with
near-integral reciprocal and the valuation statistics for $p\neq3$ extend to this setting, but the finer $3$-adic parity law of
Section~\ref{ssec:partitylaw} breaks down. Finding its correct generalization to
nonsquare $m$ would be very interesting, as would
testing whether these patterns persist in other one-parameter families of Mahler
measures associated with elliptic curves.

Thus, although the experiments do not provide a formula for $r_k$, they show
that these rational factors are far from arithmetically unstructured. The
combination of large-scale computation, statistical analysis, and machine
learning has produced concrete conjectures and has identified arithmetic phenomena that are not evident
from the currently known identities alone. The next step is
to turn these empirical regularities into arithmetic explanations and,
eventually, proofs.

\clearpage
\section{Appendix: Tables and Figures}
\label{sec:appendix}

\begin{table}[htbp]
\centering
\scriptsize
\begin{tabular}{@{}c|cccccccccc||c@{}}
$(p,r)$ & $[0,25K]$ & $[0,50K]$ & $[0,75K]$ & $[0,100K]$ & $[0,125K]$ & $[0,150K]$ & $[0,175K]$ & $[0,200K]$ & $[0,225K]$ & $[0,250K]$ & $p^{-r}$ \\
\hline
$(5,1)$ & 0.209 & 0.208 & 0.205 & 0.205 & 0.204 & 0.203 & 0.202 & 0.202 & 0.202 & 0.202 & 0.200 \\
$(5,2)$ & 0.044 & 0.044 & 0.044 & 0.044 & 0.043 & 0.043 & 0.043 & 0.043 & 0.043 & 0.043 & 0.040 \\
$(5,3)$ & 0.009 & 0.009 & 0.009 & 0.009 & 0.009 & 0.009 & 0.008 & 0.008 & 0.009 & 0.009 & 0.008 \\
$(5,4)$ & 0.002 & 0.002 & 0.002 & 0.002 & 0.002 & 0.002 & 0.002 & 0.002 & 0.002 & 0.002 & 0.002 \\
\hline\hline
$(7,1)$ & 0.142 & 0.143 & 0.144 & 0.144 & 0.143 & 0.142 & 0.142 & 0.142 & 0.142 & 0.142 & 0.143 \\
$(7,2)$ & 0.021 & 0.021 & 0.020 & 0.020 & 0.020 & 0.020 & 0.021 & 0.020 & 0.021 & 0.021 & 0.020 \\
$(7,3)$ & 0.003 & 0.003 & 0.003 & 0.003 & 0.003 & 0.003 & 0.003 & 0.003 & 0.003 & 0.003 & 0.003 \\
\hline\hline
$(11,1)$ & 0.088 & 0.090 & 0.091 & 0.090 & 0.091 & 0.091 & 0.091 & 0.090 & 0.091 & 0.091 & 0.091 \\
$(11,2)$ & 0.008 & 0.009 & 0.008 & 0.008 & 0.008 & 0.008 & 0.008 & 0.008 & 0.009 & 0.009 & 0.008 \\
$(11,3)$ & 0.001 & 0.001 & 0.001 & 0.001 & 0.001 & 0.001 & 0.001 & 0.001 & 0.001 & 0.001 & 0.001 \\
\hline\hline
$(13,1)$ & 0.074 & 0.075 & 0.076 & 0.076 & 0.076 & 0.076 & 0.076 & 0.075 & 0.076 & 0.076 & 0.077 \\
$(13,2)$ & 0.006 & 0.006 & 0.006 & 0.006 & 0.006 & 0.006 & 0.006 & 0.006 & 0.006 & 0.006 & 0.006 \\
$(13,3)$ & 0.000 & 0.001 & 0.001 & 0.000 & 0.000 & 0.000 & 0.000 & 0.000 & 0.000 & 0.000 & 0.000 \\
\hline\hline
$(17,1)$ & 0.059 & 0.058 & 0.059 & 0.059 & 0.059 & 0.059 & 0.059 & 0.059 & 0.059 & 0.059 & 0.059 \\
$(17,2)$ & 0.003 & 0.003 & 0.003 & 0.003 & 0.003 & 0.003 & 0.003 & 0.003 & 0.003 & 0.003 & 0.003 \\
\hline\hline
$(19,1)$ & 0.051 & 0.052 & 0.051 & 0.051 & 0.052 & 0.051 & 0.052 & 0.052 & 0.052 & 0.052 & 0.053 \\
$(19,2)$ & 0.003 & 0.003 & 0.003 & 0.003 & 0.003 & 0.003 & 0.003 & 0.003 & 0.003 & 0.003 & 0.003 \\
\hline\hline
$(23,1)$ & 0.043 & 0.043 & 0.043 & 0.043 & 0.043 & 0.043 & 0.043 & 0.043 & 0.043 & 0.043 & 0.043 \\
$(23,2)$ & 0.001 & 0.002 & 0.002 & 0.002 & 0.002 & 0.002 & 0.002 & 0.002 & 0.002 & 0.002 & 0.002 \\
\hline\hline
$(29,1)$ & 0.036 & 0.035 & 0.035 & 0.035 & 0.035 & 0.035 & 0.035 & 0.035 & 0.035 & 0.035 & 0.034 \\
\hline\hline
$(31,1)$ & 0.033 & 0.033 & 0.033 & 0.033 & 0.033 & 0.033 & 0.033 & 0.033 & 0.033 & 0.033 & 0.032 \\
\hline\hline
$(37,1)$ & 0.025 & 0.026 & 0.026 & 0.026 & 0.027 & 0.027 & 0.027 & 0.027 & 0.027 & 0.027 & 0.027 \\
\end{tabular}
\caption{Observed cumulative proportions of $k$ satisfying $v_p(n_k)=r$ for the pairs $(p,r)$ listed in the first column, across closed intervals of the form $[0,x]$, where $K$ denotes $1000$, together with the reference value $p^{-r}$.}
\label{tab:v5_v7_v11_v13_v17_v19_v23_v29_v31_v37_statistics_cumulative}
\end{table}

\begin{table}
\centering
\scriptsize
\begin{tabular}{@{}c|cccccccccc||c@{}}
$(p,r)$
& $[0,25K]$ & $[0,50K]$ & $[0,75K]$ & $[0,100K]$ & $[0,125K]$
& $[0,150K]$ & $[0,175K]$ & $[0,200K]$ & $[0,225K]$ & $[0,250K]$
& $p^{-r}$ \\
\hline
$(3,1)$ & 0.325 & 0.322 & 0.323 & 0.322 & 0.322 & 0.321 & 0.322 & 0.322 & 0.321 & 0.322 & 0.333 \\
$(3,2)$ & 0.231 & 0.233 & 0.234 & 0.234 & 0.234 & 0.234 & 0.234 & 0.234 & 0.235 & 0.234 & 0.111 \\
$(3,3)$ & 0.103 & 0.106 & 0.107 & 0.108 & 0.108 & 0.110 & 0.110 & 0.110 & 0.110 & 0.110 & 0.037 \\
$(3,4)$ & 0.037 & 0.041 & 0.042 & 0.042 & 0.042 & 0.042 & 0.042 & 0.042 & 0.043 & 0.043 & 0.012 \\
$(3,5)$ & 0.014 & 0.014 & 0.014 & 0.014 & 0.015 & 0.015 & 0.015 & 0.015 & 0.015 & 0.015 & 0.004 \\
$(3,6)$ & 0.005 & 0.005 & 0.005 & 0.005 & 0.005 & 0.005 & 0.005 & 0.005 & 0.005 & 0.005 & 0.001 \\
\end{tabular}
\caption{Observed cumulative proportions of $k$ satisfying $v_3(n_k)=r$ for the pairs $(3,r)$ listed in the first column, across closed intervals of the form $[0,x]$, where $K$ denotes $1000$.}
\label{tab:v3_statistics_cumulative}
\end{table}

\begin{table}[htbp]
\centering
\scriptsize
\begin{tabular}{@{}cc|cccccccccc@{}}
$(p,r)$ & $k\pmod 3$ & $[0,25K]$ & $[0,50K]$ & $[0,75K]$ & $[0,100K]$ & $[0,125K]$ & $[0,150K]$ & $[0,175K]$ & $[0,200K]$ & $[0,225K]$ & $[0,250K]$ \\
\hline
$(3,1)$ & all & 0.718 & 0.724 & 0.728 & 0.728 & 0.728 & 0.730 & 0.731 & 0.731 & 0.732 & 0.732 \\
 & $0$ & 0.754 & 0.755 & 0.757 & 0.757 & 0.757 & 0.758 & 0.758 & 0.758 & 0.758 & 0.758 \\
 & $1$ & 0.698 & 0.708 & 0.714 & 0.713 & 0.713 & 0.714 & 0.715 & 0.717 & 0.718 & 0.719 \\
 & $2$ & 0.700 & 0.708 & 0.713 & 0.715 & 0.716 & 0.717 & 0.718 & 0.719 & 0.719 & 0.720 \\
\hline
$(3,2)$ & all & 0.392 & 0.401 & 0.405 & 0.406 & 0.407 & 0.408 & 0.409 & 0.410 & 0.411 & 0.411 \\
 & $0$ & 0.432 & 0.436 & 0.440 & 0.442 & 0.442 & 0.444 & 0.444 & 0.445 & 0.444 & 0.445 \\
 & $1$ & 0.375 & 0.385 & 0.389 & 0.389 & 0.389 & 0.390 & 0.391 & 0.393 & 0.394 & 0.394 \\
 & $2$ & 0.370 & 0.382 & 0.384 & 0.388 & 0.389 & 0.391 & 0.391 & 0.392 & 0.394 & 0.393 \\
\hline
$(3,3)$ & all & 0.161 & 0.169 & 0.171 & 0.172 & 0.173 & 0.175 & 0.175 & 0.176 & 0.176 & 0.176 \\
 & $0$ & 0.187 & 0.193 & 0.194 & 0.196 & 0.195 & 0.196 & 0.197 & 0.197 & 0.198 & 0.198 \\
 & $1$ & 0.149 & 0.156 & 0.161 & 0.163 & 0.163 & 0.165 & 0.166 & 0.166 & 0.166 & 0.166 \\
 & $2$ & 0.148 & 0.156 & 0.157 & 0.159 & 0.161 & 0.163 & 0.163 & 0.164 & 0.165 & 0.165 \\
\hline
\end{tabular}
\caption{Observed cumulative proportions of $k$ satisfying $v_3(n_k)\ge r$ for the pairs $(3,r)$ listed in the first column, split according to congruence classes of $k \pmod 3$, across closed intervals of the form $[0,x]$, where $K$ denotes $1000$. }
\label{tab:v3_geq_mod3_statistics_congruence_cumulative}
\end{table}

\begin{figure}[htbp]
    \centering
    \includegraphics[scale=0.7]{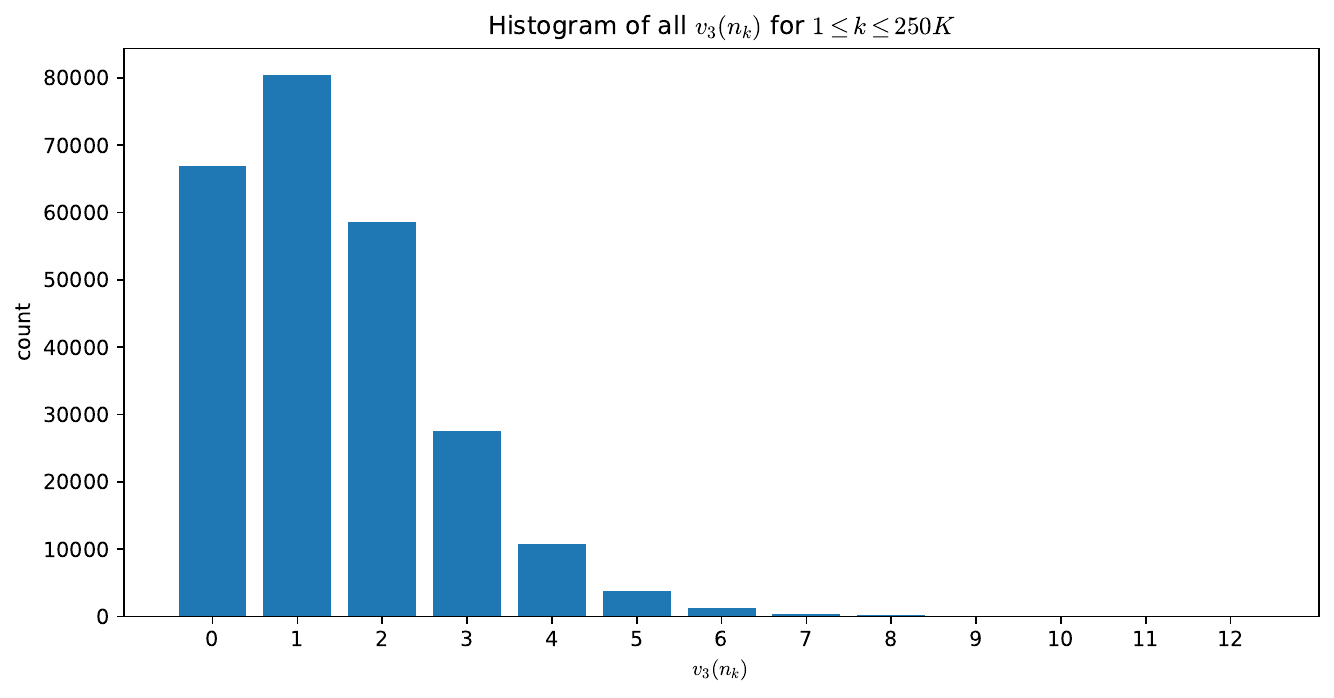}

    \caption{Histogram of the $3$-adic valuations $v_3(n_k)$ for $1 \leq k \leq 250K$ such that  $v_3(n_k) \geq 0$, where $K=1000$.}
    \label{fig:histogram_v3_0_250K}
\end{figure}

\begin{table}[htbp]
\centering
\scriptsize
\begin{tabular}{@{}c|cccccccccc@{}}
$(p,r)$ & $[0,25K]$ & $[0,50K]$ & $[0,75K]$ & $[0,100K]$ & $[0,125K]$ & $[0,150K]$ & $[0,175K]$ & $[0,200K]$ & $[0,225K]$ & $[0,250K]$ \\
\hline
$(2,1)$ & 0.000 & 0.000 & 0.000 & 0.000 & 0.000 & 0.000 & 0.000 & 0.000 & 0.000 & 0.000 \\
$(2,2)$ & 0.002 & 0.002 & 0.001 & 0.001 & 0.001 & 0.001 & 0.001 & 0.001 & 0.001 & 0.001 \\
$(2,3)$ & 0.011 & 0.008 & 0.007 & 0.006 & 0.005 & 0.005 & 0.005 & 0.005 & 0.004 & 0.004 \\
$(2,4)$ & 0.028 & 0.021 & 0.018 & 0.016 & 0.015 & 0.014 & 0.014 & 0.013 & 0.012 & 0.012 \\
$(2,5)$ & 0.053 & 0.043 & 0.038 & 0.035 & 0.033 & 0.031 & 0.030 & 0.029 & 0.028 & 0.027 \\
$(2,6)$ & 0.089 & 0.075 & 0.068 & 0.064 & 0.061 & 0.058 & 0.056 & 0.055 & 0.053 & 0.052 \\
$(2,7)$ & 0.131 & 0.115 & 0.107 & 0.102 & 0.097 & 0.093 & 0.091 & 0.089 & 0.087 & 0.085 \\
$(2,8)$ & 0.147 & 0.140 & 0.133 & 0.129 & 0.126 & 0.123 & 0.121 & 0.119 & 0.117 & 0.115 \\
$(2,9)$ & 0.148 & 0.147 & 0.146 & 0.144 & 0.143 & 0.141 & 0.140 & 0.138 & 0.137 & 0.136 \\
$(2,10)$ & 0.125 & 0.134 & 0.137 & 0.137 & 0.137 & 0.138 & 0.138 & 0.138 & 0.138 & 0.138 \\
$(2,11)$ & 0.095 & 0.108 & 0.114 & 0.118 & 0.121 & 0.123 & 0.124 & 0.125 & 0.125 & 0.126 \\
$(2,12)$ & 0.067 & 0.080 & 0.086 & 0.090 & 0.093 & 0.096 & 0.098 & 0.100 & 0.102 & 0.103 \\
$(2,13)$ & 0.045 & 0.053 & 0.058 & 0.062 & 0.065 & 0.068 & 0.070 & 0.072 & 0.074 & 0.075 \\
$(2,14)$ & 0.026 & 0.033 & 0.037 & 0.040 & 0.042 & 0.044 & 0.045 & 0.047 & 0.048 & 0.050 \\
$(2,15)$ & 0.015 & 0.019 & 0.021 & 0.024 & 0.026 & 0.028 & 0.029 & 0.031 & 0.031 & 0.032 \\
$(2,16)$ & 0.007 & 0.010 & 0.013 & 0.014 & 0.015 & 0.016 & 0.017 & 0.018 & 0.018 & 0.019 \\
$(2,17)$ & 0.004 & 0.006 & 0.007 & 0.008 & 0.009 & 0.009 & 0.010 & 0.010 & 0.011 & 0.011 \\
$(2,18)$ & 0.003 & 0.003 & 0.004 & 0.005 & 0.005 & 0.005 & 0.005 & 0.006 & 0.006 & 0.006 \\
$(2,19)$ & 0.002 & 0.002 & 0.002 & 0.002 & 0.003 & 0.003 & 0.003 & 0.003 & 0.003 & 0.003 \\
$(2,20)$ & 0.001 & 0.001 & 0.001 & 0.001 & 0.001 & 0.001 & 0.002 & 0.002 & 0.002 & 0.002 \\
\end{tabular}
\caption{Observed cumulative proportions of $k$ satisfying  $v_2(n_k)=r$ for the pairs $(2,r)$ listed in the first column, across closed intervals of the form $[0,x]$, where $K$ denotes $1000$.}
\label{tab:v2_statistics_cumulative}
\end{table}

\begin{table}[htbp]
\centering
\scriptsize
\begin{tabular}{@{}cc|cccccccccc@{}}
$(p,r)$ & $k\pmod 7$ & $[0,25K]$ & $[0,50K]$ & $[0,75K]$ & $[0,100K]$ & $[0,125K]$ & $[0,150K]$ & $[0,175K]$ & $[0,200K]$ & $[0,225K]$ & $[0,250K]$ \\
\hline
$(2,6)$ & all & 0.905 & 0.926 & 0.935 & 0.941 & 0.945 & 0.948 & 0.951 & 0.953 & 0.954 & 0.956 \\
 & $0$ & 0.921 & 0.938 & 0.947 & 0.952 & 0.956 & 0.958 & 0.961 & 0.963 & 0.964 & 0.965 \\
 & $1$ & 0.863 & 0.892 & 0.905 & 0.914 & 0.919 & 0.923 & 0.927 & 0.930 & 0.933 & 0.935 \\
 & $2$ & 0.870 & 0.896 & 0.908 & 0.914 & 0.919 & 0.923 & 0.927 & 0.930 & 0.932 & 0.934 \\
 & $3$ & 0.971 & 0.982 & 0.987 & 0.989 & 0.990 & 0.991 & 0.992 & 0.992 & 0.993 & 0.993 \\
 & $4$ & 0.972 & 0.981 & 0.985 & 0.988 & 0.989 & 0.990 & 0.991 & 0.991 & 0.992 & 0.992 \\
 & $5$ & 0.865 & 0.896 & 0.909 & 0.916 & 0.921 & 0.926 & 0.928 & 0.931 & 0.933 & 0.935 \\
 & $6$ & 0.872 & 0.896 & 0.908 & 0.916 & 0.922 & 0.925 & 0.928 & 0.931 & 0.933 & 0.935 \\
\hline
$(2,7)$ & all & 0.816 & 0.851 & 0.867 & 0.877 & 0.884 & 0.890 & 0.894 & 0.898 & 0.901 & 0.903 \\
 & $0$ & 0.835 & 0.870 & 0.888 & 0.896 & 0.902 & 0.907 & 0.911 & 0.914 & 0.917 & 0.919 \\
 & $1$ & 0.754 & 0.800 & 0.818 & 0.833 & 0.841 & 0.848 & 0.854 & 0.859 & 0.862 & 0.865 \\
 & $2$ & 0.767 & 0.805 & 0.823 & 0.833 & 0.841 & 0.848 & 0.853 & 0.859 & 0.863 & 0.866 \\
 & $3$ & 0.917 & 0.942 & 0.952 & 0.958 & 0.962 & 0.964 & 0.967 & 0.969 & 0.970 & 0.971 \\
 & $4$ & 0.925 & 0.944 & 0.954 & 0.959 & 0.962 & 0.965 & 0.967 & 0.969 & 0.970 & 0.972 \\
 & $5$ & 0.754 & 0.798 & 0.818 & 0.832 & 0.842 & 0.849 & 0.853 & 0.858 & 0.862 & 0.865 \\
 & $6$ & 0.758 & 0.799 & 0.817 & 0.831 & 0.841 & 0.847 & 0.853 & 0.857 & 0.862 & 0.864 \\
\hline
\end{tabular}
\caption{Observed cumulative proportions of $k$ satisfying $v_2(n_k)\ge r$ for the pairs $(2,r)$ listed in the first column, split according to congruence classes of $k \pmod 7$, across closed intervals of the form $[0,x]$, where $K$ denotes $1000$.
\label{tab:v2_geq_mod7_statistics_congruence_cumulative}}
\end{table}

\begin{figure}[htbp]
    \centering
    \includegraphics[scale=0.7]{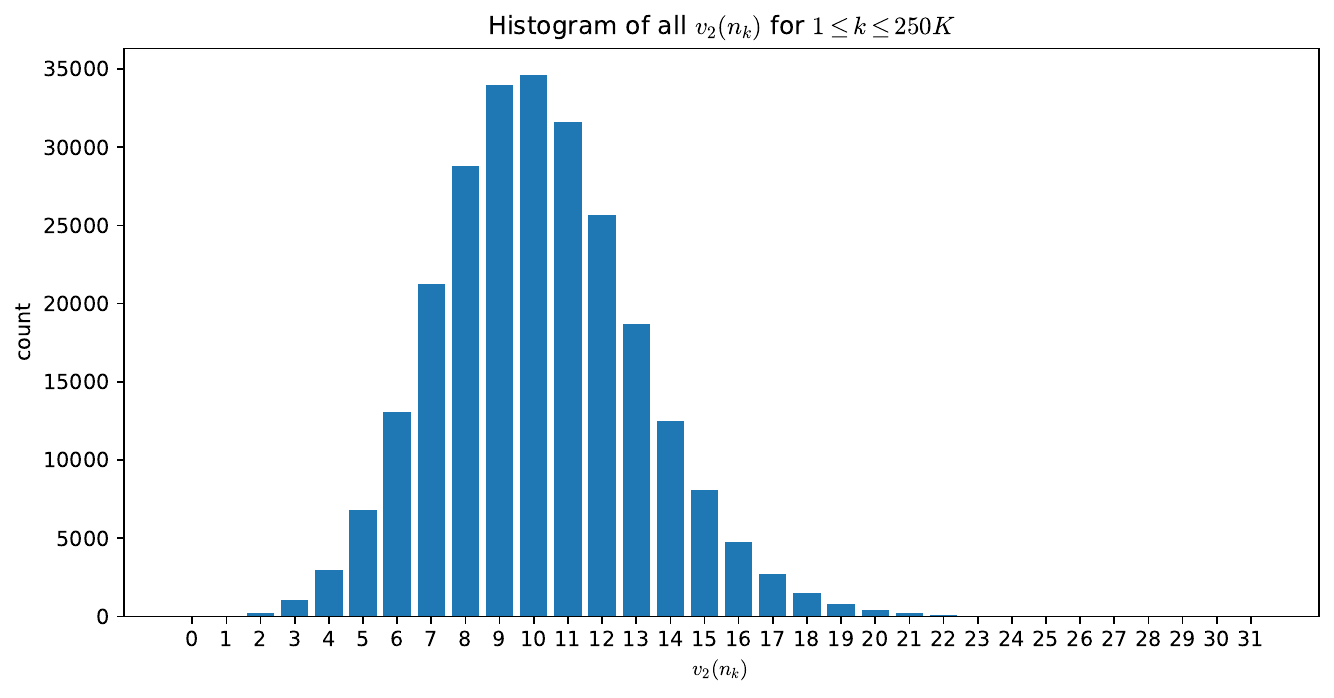}

    \caption{Histogram of the $2$-adic valuations $v_2(n_k)$ for $1 \leq k \leq 250K$ such that $v_2(n_k) \geq 0$, where $K=1000$.}
    \label{fig:histogram_v2_0_250K}
\end{figure}

\begin{table}[htbp]
\centering\small
\setlength{\tabcolsep}{4.5pt}
\begin{tabular}{r l l c c c c}
\toprule
$k$ & $k-4$ & $k+4$ & $W(k)$ & $\hat v(k)$ & $v_2(n_k)$ & $v_2(n_k)-\hat v(k)$\\
\midrule
$321559$  & $3\cdot5\cdot13\cdot17\cdot97$           & $11\cdot23\cdot31\cdot41$            & $21$ & $17$ & $19$ & $+2$\\
$400201$  & $3\cdot7\cdot17\cdot19\cdot59$           & $5\cdot13\cdot47\cdot131$           & $21$ & $17$ & $20$ & $+3$\\
$440891$  & $23\cdot29\cdot661$                      & $3\cdot5\cdot7\cdot13\cdot17\cdot19$ & $21$ & $17$ & $19$ & $+2$\\
$471709$  & $3\cdot5\cdot13\cdot41\cdot59$           & $11\cdot19\cdot37\cdot61$           & $21$ & $17$ & $21$ & $+4$\\
$474415$  & $3\cdot7\cdot19\cdot29\cdot41$           & $11\cdot17\cdot43\cdot59$           & $21$ & $17$ & $18$ & $+1$\\
$519589$  & $3\cdot5\cdot11\cdot47\cdot67$           & $19\cdot23\cdot29\cdot41$           & $21$ & $17$ & $20$ & $+3$\\
$572029$  & $3\cdot5^{2}\cdot29\cdot263$             & $7\cdot11\cdot17\cdot19\cdot23$     & $21$ & $17$ & $22$ & $+5$\\
$729491$  & $11\cdot17\cdot47\cdot83$                & $3^{2}\cdot5\cdot13\cdot29\cdot43$  & $21$ & $17$ & $21$ & $+4$\\
$889291$  & $3\cdot7\cdot17\cdot47\cdot53$           & $5\cdot11\cdot19\cdot23\cdot37$     & $23$ & $19$ & $21$ & $+2$\\
$1008011$ & $7\cdot11\cdot13\cdot19\cdot53$          & $3\cdot5\cdot17\cdot59\cdot67$      & $22$ & $18$ & $20$ & $+2$\\
$1394701$ & $3\cdot17\cdot23\cdot29\cdot41$          & $5\cdot13\cdot43\cdot499$           & $22$ & $18$ & $21$ & $+3$\\
$1601149$ & $3^{2}\cdot5\cdot7\cdot13\cdot17\cdot23$ & $83\cdot101\cdot191$                & $22$ & $18$ & $21$ & $+3$\\
$1847751$ & $11\cdot17\cdot41\cdot241$              & $5\cdot7\cdot13\cdot31\cdot131$         & $22$ & $18$ & $23$ & $+5$\\
$1944939$ & $5\cdot19\cdot59\cdot347$               & $7\cdot11\cdot13\cdot29\cdot67$         & $22$ & $18$ & $22$ & $+4$\\
\bottomrule
\end{tabular}
\caption{Prime-rich odd values of $k$ in $2.5\times10^{5}<k<2\times10^{6}$ chosen to
maximise $W(k):=\omega_{\mathrm{odd}}(k)+2\,\omega_{\mathrm{odd}}(k^2-16)$; the maximum
$W=23$ is attained uniquely at $k=889291$. For odd $k$ the curve $E_k$ is semistable at
$2$, so $c(k)=4$ and $\hat v(k)=W(k)-4$. The table includes every odd $k$ in the window
with $W\ge22$, together with a selection of those with $W=21$. In every case
$v_2(n_k)\ge\hat v(k)$. }
\label{tab:largeW-odd}
\end{table}

\begin{table}[htbp]
\centering\small
\setlength{\tabcolsep}{4.5pt}
\begin{tabular}{r l l c c c c}
\toprule
$k$ & $k-4$ & $k+4$ & $W(k)$ & $\hat v(k)$ & $v_2(n_k)$ & $v_2(n_k)-\hat v(k)$\\
\midrule
$940474$  & $2\cdot3\cdot5\cdot23\cdot29\cdot47$     & $2\cdot7\cdot11\cdot31\cdot197$         & $21$ & $19$ & $21$ & $+2$\\
$1069814$ & $2\cdot5\cdot7\cdot17\cdot29\cdot31$     & $2\cdot3\cdot37\cdot61\cdot79$          & $21$ & $19$ & $27$ & $+8$\\
$1195674$ & $2\cdot5\cdot7\cdot19\cdot29\cdot31$     & $2\cdot11\cdot17\cdot23\cdot139$        & $21$ & $19$ & $20$ & $+1$\\
$1204606$ & $2\cdot3\cdot7\cdot23\cdot29\cdot43$     & $2\cdot5\cdot11\cdot47\cdot233$         & $21$ & $19$ & $22$ & $+3$\\
$1234874$ & $2\cdot5\cdot7\cdot13\cdot23\cdot59$     & $2\cdot3\cdot29\cdot47\cdot151$         & $21$ & $19$ & $22$ & $+3$\\
$1258786$ & $2\cdot3\cdot7\cdot17\cdot41\cdot43$     & $2\cdot5\cdot13\cdot23\cdot421$         & $21$ & $19$ & $21$ & $+2$\\
$1286666$ & $2\cdot13\cdot17\cdot41\cdot71$          & $2\cdot3\cdot5\cdot7\cdot11\cdot557$    & $21$ & $19$ & $21$ & $+2$\\
$1540136$ & $2^{2}\cdot11\cdot17\cdot29\cdot71$      & $2^{2}\cdot3\cdot5\cdot7\cdot19\cdot193$ & $21$ & $19$ & $19$ & $+0$\\
$1587226$ & $2\cdot3^{3}\cdot7\cdot13\cdot17\cdot19$ & $2\cdot5\cdot23\cdot67\cdot103$         & $21$ & $19$ & $20$ & $+1$\\
$1614826$ & $2\cdot3\cdot11\cdot43\cdot569$          & $2\cdot5\cdot7\cdot17\cdot23\cdot59$    & $21$ & $19$ & $20$ & $+1$\\
$1653526$ & $2\cdot3\cdot13\cdot17\cdot29\cdot43$    & $2\cdot5\cdot37\cdot41\cdot109$         & $21$ & $19$ & $22$ & $+3$\\
$1669906$ & $2\cdot3\cdot13\cdot79\cdot271$          & $2\cdot5\cdot11\cdot17\cdot19\cdot47$   & $21$ & $19$ & $20$ & $+1$\\
$1700846$ & $2\cdot11\cdot13\cdot19\cdot313$         & $2\cdot3\cdot5^{2}\cdot17\cdot23\cdot29$ & $21$ & $19$ & $21$ & $+2$\\
$1703464$ & $2^{2}\cdot3\cdot5\cdot11\cdot29\cdot89$ & $2^{2}\cdot13\cdot17\cdot41\cdot47$     & $21$ & $19$ & $26$ & $+7$\\
$1717166$ & $2\cdot41\cdot43\cdot487$                & $2\cdot3\cdot5\cdot7\cdot13\cdot17\cdot37$ & $21$ & $19$ & $24$ & $+5$\\
$1722886$ & $2\cdot3\cdot7\cdot17\cdot19\cdot127$    & $2\cdot5\cdot13\cdot29\cdot457$         & $21$ & $19$ & $21$ & $+2$\\
$1762094$ & $2\cdot5\cdot11\cdot83\cdot193$          & $2\cdot3\cdot13\cdot19\cdot29\cdot41$   & $21$ & $19$ & $24$ & $+5$\\
$1774630$ & $2\cdot3\cdot7\cdot29\cdot31\cdot47$     & $2\cdot23\cdot173\cdot223$             & $21$ & $19$ & $22$ & $+3$\\
$1791566$ & $2\cdot17\cdot23\cdot29\cdot79$          & $2\cdot3\cdot5\cdot11\cdot61\cdot89$    & $21$ & $19$ & $24$ & $+5$\\
$1812794$ & $2\cdot5\cdot7\cdot19\cdot29\cdot47$     & $2\cdot3^{2}\cdot13\cdot61\cdot127$     & $21$ & $19$ & $19$ & $+0$\\
$1885594$ & $2\cdot3^{2}\cdot5\cdot7\cdot41\cdot73$  & $2\cdot11\cdot13\cdot19\cdot347$        & $21$ & $19$ & $21$ & $+2$\\
$1899206$ & $2\cdot19\cdot23\cdot41\cdot53$          & $2\cdot3\cdot5\cdot29\cdot37\cdot59$    & $21$ & $19$ & $21$ & $+2$\\
$1952786$ & $2\cdot13\cdot19\cdot59\cdot67$          & $2\cdot3\cdot5\cdot7\cdot17\cdot547$    & $21$ & $19$ & $20$ & $+1$\\
$1984814$ & $2\cdot5\cdot41\cdot47\cdot103$          & $2\cdot3\cdot11\cdot17\cdot29\cdot61$   & $21$ & $19$ & $19$ & $+0$\\
\bottomrule
\end{tabular}
\caption{Prime-rich even values of $k$ in $2.5\times10^{5}<k<2\times10^{6}$ chosen to
maximise $W(k):=\omega_{\mathrm{odd}}(k)+2\,\omega_{\mathrm{odd}}(k^2-16)$; for even $k$
the maximum is $W=21$. All are even with $16\nmid k$, so $E_k$ is additive at $2$,
$c(k)=2$ and $\hat v(k)=W(k)-2$. The table lists every even $k$ in the window attaining
$W=21$, i.e.\ the complete set of even maximisers. In every case $v_2(n_k)\ge\hat v(k)$,
with equality at $k=1540136$, $k=1812794$, and $k=1984814$.}
\label{tab:largeW-even}
\end{table}

\clearpage

\bibliographystyle{alpha}
\bibliography{Bibliography}

\end{document}